\newtheorem{theo}{Theorem}
\newtheorem{cor}{Corollary}
\newtheorem{lem}{Lemma}
\newtheorem{prop}{Proposition}
\theoremstyle{definition}
\theoremstyle{remark}
\newtheorem{rem}{\bf Remark\/}
\newtheorem{rems}[rem]{\bf Remarks\/}
\numberwithin{equation}{section}
\definecolor{bronze}{rgb}{0.4, 0.7, 0.1}
\def\1{{\mathchoice {\rm 1\mskip-4mu l} {\rm 1\mskip-4mu l}{\rm 1\mskip-4.5mu l} {\rm 1\mskip-5mu l}}}
\newcommand{\ds}{\displaystyle}
\title{Zeros of new Bergman kernels}
\author[N. Ghiloufi]{Noureddine Ghiloufi}
\email{noureddine.ghiloufi@fsg.rnu.tn, nghiloufi@kfu.edu.sa}
\address{Department of mathematics\\College of science\\P.O. box 400 King Faisal University \\  Al-Ahsa, 31982\\ Kingdom of Saudi Arabia.}
\address{University of Gabes\\ Faculty of Sciences of Gabes\\ LR17ES11 Mathematics and Applications\\ 6072, Gabes, Tunisia.}
\author[S. Snoun]{Safa Snoun}
\email{snoun.safa@gmail.com}
\address{University of Gabes\\ Faculty of Sciences of Gabes\\ LR17ES11 Mathematics and Applications\\ 6072, Gabes, Tunisia.}
\subjclass[2010]{30H20, 30C15}
\keywords{Bergman spaces, Bergman Kernels, zeros of holomorphic functions, algebraic sets}
\begin{document}

\begin{abstract}
    In this paper we determine explicitly the kernels  $\mathbb K_{\alpha,\beta}$ associated with new Bergman spaces $\mathcal A_{\alpha,\beta}^2(\mathbb D)$ considered recently  by the first author and M. Zaway. Then we study the distribution of the zeros of these kernels essentially when $\alpha\in\mathbb N$ where the zeros are given by the zeros of a real polynomial $Q_{\alpha,\beta}$. Some numerical results are given throughout the paper.
\end{abstract}
\maketitle

\section{Introduction}
    The notion of Bergman kernels has several applications and represents an essential tool in complex analysis and geometry. Sometimes it is necessary to  determine explicitly these kernels, however this is not simple in general. In fact if an orthonormal basis of a Hilbert space is given then the Bergman kernel of this space can be obtained as a series using the basis elements. For example, the Bergman kernel of the space $\mathcal A^2_\alpha(\mathbb D)$ of holomorphic functions on the unit disk $\mathbb D$ of $\mathbb C$  that are square integrable with respect to the positive  measure $d\mu_\alpha(z)=(\alpha+1)(1-|z|^2)^\alpha dA(z)$ is given by $\mathbb K_\alpha(z,w)=\frac{1}{(1-z\overline{w})^{\alpha+2}}$.  Hence this kernel has no zero in $\mathbb D$. For more details about Bergman spaces one can see \cite{He-KO-Zh}. In order to obtain kernels with zeros in $\mathbb D$, Krantz consider in his book \cite{Kr} some subspaces of $\mathcal A^2_\alpha(\mathbb D)$. In our statement, instead of considering subspaces, we  modify slightly the measure $d\mu_\alpha$ to obtain a Bergman kernel that is comparable in some sense with the previous one with some zeros in $\mathbb D$. These spaces are considered recently by the first author and Zaway in \cite{Gh-Za}. We recall the main back-ground of this paper:\\

    Throughout the paper, $\mathbb D$ will be the unit disk of the complex plane $\mathbb C$ as it was mentioned before and $\mathbb D^*=\mathbb D\smallsetminus\{0\}$. We set $\mathbb N:=\{0,1,2,\dots\}$ the set of positive integers and $\mathbb R$ the set of real numbers. We claim that a real number $x$ is said to be positive (resp. negative) if $x\geq0$ (resp. $x\leq0$).\\
    For every $-1<\alpha,\beta<+\infty$, we consider the positive measure $\mu_{\alpha,\beta}$ on $\mathbb D$ defined by  $$d\mu_{\alpha,\beta}(z):=\frac{1}{\mathscr B(\alpha+1,\beta+1)}|z|^{2\beta}(1-|z|^2)^\alpha dA(z)$$ where $\mathscr B$ is the beta function defined by $$\mathscr B(s,t)=\int_0^1x^{s-1}(1-x)^{t-1}dx=\frac{\Gamma(s)\Gamma(t)}{\Gamma(s+t)},\quad \forall\; s,t>0$$ and $$dA(z)=\frac{1}{\pi}dx dy=\frac{1}{\pi}rdrd\theta,\quad z=x+iy=re^{i\theta}$$ the normalized area measure on $\mathbb D$.\\
    We denote by $\mathcal A_{\alpha,\beta}^2(\mathbb D)$ the set of holomorphic functions on $\mathbb D^*$ that belongs to the space:
    $$\textbf{L}^2(\mathbb D,d\mu_{\alpha,\beta})=\{f:\mathbb D\longrightarrow \mathbb C;\hbox{ measurable function such that } \|f\|_{\alpha,\beta,2}<+\infty\}$$
    where $$\|f\|_{\alpha,\beta,2}^2:=\int_{\mathbb D}|f(z)|^2d\mu_{\alpha,\beta}(z).$$
    The set $\mathcal A_{\alpha,\beta}^2(\mathbb D)$ is a Hilbert space and $\mathcal A_{\alpha,\beta}^2(\mathbb D)=\mathcal A_{\alpha,m}^2(\mathbb D)$ if $\beta=\beta_0+m$ with $m\in\mathbb N$ and $-1<\beta_0\leq 0$ (see \cite{Gh-Za} for more details). We claim here that $\mathcal A_{\alpha,\beta_0}^2(\mathbb D)=\mathcal A_\alpha^2(\mathbb D)$ is the classical Bergman space equipped with the new norm $\|.\|_{\alpha,\beta_0,2}$. Moreover, for any $\alpha,\beta>-1$, if we set
    \begin{equation}\label{eq1.1}
        e_n(z)=\sqrt{\frac{\mathscr B(\alpha+1,\beta+1)}{\mathscr B(\alpha+1,n+\beta+1)}}\ z^n
    \end{equation}
    for every $n\geq -m$, then the sequence $(e_n)_{n\geq -m}$ is an orthonormal basis of $\mathcal A_{\alpha,\beta}^2(\mathbb D)$. Furthermore, if  $f,g\in\mathcal A_{\alpha,\beta}^2(\mathbb D)$ with $$f(z)=\sum_{n= -m}^{+\infty}a_nz^n,\quad g(z)=\sum_{n= -m}^{+\infty}b_nz^n$$  then $$\langle f,g\rangle_{\alpha,\beta}=\sum_{n= -m}^{+\infty}a_n\overline{b}_n\frac{\mathscr B(\alpha+1,n+\beta+1)}{\mathscr B(\alpha+1,\beta+1)}$$ where $\langle .,.\rangle_{\alpha,\beta}$ is the inner product in $\mathcal A_{\alpha,\beta}^2(\mathbb D)$ inherited from $\textbf{L}^2(\mathbb D,d\mu_{\alpha,\beta})$.\\

    The following main result determine the reproducing kernel of $\mathcal A_{\alpha,\beta}^2(\mathbb D)$.
    \begin{theo}\label{th1}
        Let $-1<\alpha,\beta<+\infty$ and  $\mathbb{K}_{\alpha,\beta}$ be  the reproducing Bergman kernel  of $\mathcal A_{\alpha,\beta}^2(\mathbb D)$. Then
        $$\ds \mathbb{K}_{\alpha,\beta}(w,z)=\frac{Q_{\alpha,\beta}(w\overline{z})}{(w\overline{z})^m(1-w\overline{z})^{2+\alpha}}$$ where
        $$Q_{\alpha,\beta}(\xi)=\left\{
            \begin{array}{lcl}
            (\alpha+1)\mathscr B(\alpha+1,\beta+1)& if &\beta\in\mathbb N\\
            \ds\beta_0\frac{\mathscr B(\alpha+1,\beta+1)}{\mathscr B(\alpha+1,\beta_0+1)}\sum_{n=0}^{+\infty}\frac{(-\xi)^n}{n+\beta_0}{\alpha+1\choose n} &if&\beta\not\in\mathbb N
          \end{array}\right.
        $$
        with $\beta_0=\beta-\lfloor\beta\rfloor-1=\beta-m$.
    \end{theo}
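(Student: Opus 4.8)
The plan is to start from the abstract description of the reproducing kernel in terms of the orthonormal basis $(e_n)_{n\geq -m}$ supplied in \eqref{eq1.1}. For a separable Hilbert space of holomorphic functions with orthonormal basis $(e_n)$ the reproducing kernel is $\mathbb K_{\alpha,\beta}(w,z)=\sum_n e_n(w)\overline{e_n(z)}$, the series converging locally uniformly on $\mathbb D\times\mathbb D$. Substituting the explicit $e_n$ and writing $\xi=w\overline z$ collapses everything into a single power series,
$$\mathbb K_{\alpha,\beta}(w,z)=\mathscr B(\alpha+1,\beta+1)\sum_{n=-m}^{+\infty}\frac{\xi^{\,n}}{\mathscr B(\alpha+1,n+\beta+1)}.$$
The reindexing $k=n+m$, together with the identity $n+\beta+1=k+\beta_0+1$ (which is exactly where the parameter $\beta_0=\beta-m$ enters), lets me factor out $\xi^{-m}$, so the whole problem reduces to evaluating in closed form the series $S(\xi)=\sum_{k\geq 0}\xi^{\,k}/\mathscr B(\alpha+1,k+\beta_0+1)$; one then reads off $Q_{\alpha,\beta}(\xi)=\mathscr B(\alpha+1,\beta+1)(1-\xi)^{\alpha+2}S(\xi)$.

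For the integer case $\beta\in\mathbb N$ one has $\beta_0=0$, and I would rewrite $1/\mathscr B(\alpha+1,k+1)=(\alpha+1)\binom{k+\alpha+1}{k}$ using the Gamma-function form of the Beta function. The classical binomial generating function $\sum_{k\geq0}\binom{k+\alpha+1}{k}\xi^{k}=(1-\xi)^{-(\alpha+2)}$ then gives $S(\xi)=(\alpha+1)(1-\xi)^{-(\alpha+2)}$ at once, whence $Q_{\alpha,\beta}$ is the announced constant $(\alpha+1)\mathscr B(\alpha+1,\beta+1)$.

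The genuine work is the non-integer case, and this is the step I expect to be the main obstacle. Writing the Beta quotient through Pochhammer symbols, $1/\mathscr B(\alpha+1,k+\beta_0+1)=\mathscr B(\alpha+1,\beta_0+1)^{-1}(\alpha+\beta_0+2)_k/(\beta_0+1)_k$, identifies $S$ as a Gauss hypergeometric series, namely $S(\xi)=\mathscr B(\alpha+1,\beta_0+1)^{-1}\,{}_2F_1(\alpha+\beta_0+2,1;\beta_0+1;\xi)$. The decisive move is Euler's transformation ${}_2F_1(a,b;c;\xi)=(1-\xi)^{c-a-b}\,{}_2F_1(c-a,c-b;c;\xi)$, which with $a=\alpha+\beta_0+2$, $b=1$, $c=\beta_0+1$ produces precisely the factor $(1-\xi)^{-(\alpha+2)}$ and leaves the series ${}_2F_1(-\alpha-1,\beta_0;\beta_0+1;\xi)$. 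It then remains to simplify its coefficients: the telescoping $(\beta_0)_n/(\beta_0+1)_n=\beta_0/(n+\beta_0)$ together with $(-\alpha-1)_n/n!=(-1)^n\binom{\alpha+1}{n}$ turns this ${}_2F_1$ into $\beta_0\sum_{n\geq0}\frac{(-\xi)^n}{n+\beta_0}\binom{\alpha+1}{n}$, and multiplying by $\mathscr B(\alpha+1,\beta+1)/\mathscr B(\alpha+1,\beta_0+1)$ yields the stated $Q_{\alpha,\beta}$. Throughout I would note that $|\xi|=|w\overline z|<1$ on $\mathbb D\times\mathbb D$ guarantees convergence and legitimizes the hypergeometric manipulation; moreover, when $\alpha\in\mathbb N$ the coefficients $\binom{\alpha+1}{n}$ vanish for $n>\alpha+1$, so $Q_{\alpha,\beta}$ degenerates into a genuine polynomial, which is the form exploited in the subsequent analysis of the zeros.
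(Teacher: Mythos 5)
Your proposal is correct, and in the non-integer case it takes a genuinely different route from the paper. The paper splits that case in two: first, for $\alpha\in\mathbb N$, it proves by induction a recurrence (its Lemma~\ref{l1}) expressing $Q_{\alpha+1,\beta}$ in terms of $Q_{\alpha,\beta}$ and its derivative; then, for general $\alpha>-1$, it shows that both the kernel-derived function $S_{\alpha,\beta_0}$ and the candidate series $G_{\alpha,\beta_0}$ solve the first-order linear ODE $\xi F'(\xi)=-\beta_0F(\xi)+(1-\xi)^{\alpha+1}$, and concludes equality because the homogeneous solutions $\sigma t^{-\beta_0}$ are not differentiable at $0$ unless $\sigma=0$. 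You instead identify the series $\sum_k\xi^k/\mathscr B(\alpha+1,k+\beta_0+1)$ as $\mathscr B(\alpha+1,\beta_0+1)^{-1}\,{}_2F_1(\alpha+\beta_0+2,1;\beta_0+1;\xi)$ and invoke Euler's transformation; your parameter bookkeeping checks out ($c-a-b=-\alpha-2$, $c-a=-\alpha-1$, $c-b=\beta_0$, the Pochhammer simplifications are right, and the transformation is legitimate since $c=\beta_0+1\in(0,1)$ is not a non-positive integer and $|\xi|<1$). Your route is shorter and uniform in $\alpha>-1$, with no case split and no induction, at the cost of importing a classical identity whose standard proof is itself an ODE argument --- so in spirit it repackages the paper's second step. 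What the paper's longer route buys is a collection of byproducts that the rest of the paper relies on: the recurrence of Lemma~\ref{l1} gives $\deg Q_{\alpha,\beta}=\alpha+1$ and $Q_{\alpha,\beta}(1)\neq0$, and the ODE reappears as Corollary~\ref{cor2} and as the derivative identity $z(T_\beta f)'(z)=f(z)-\beta T_\beta f(z)$ underpinning the implicit-function-theorem analysis of the zero sets in Section~3. If you adopted your proof in the paper, those facts would still need to be established separately (though they follow quickly from the explicit formula once it is known).
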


As a consequence of this main result, the study can be reduced to the case $\beta=\beta_0\in]-1,0]$. Indeed if we set  $\mathbb{K}_{\alpha,\beta}(w,z)=\mathcal{K}_{\alpha,\beta}(w\overline{z})$ and
$$\begin{array}{lccl}
      M:&\mathcal A_{\alpha,\beta}^2(\mathbb D)&\longrightarrow& \mathcal A_{\alpha,\beta_0}^2(\mathbb D)\\
      & f& \longmapsto & \ds \frac{\mathscr B(\alpha+1,\beta_0+1)}{\mathscr B(\alpha+1,\beta+1)}z^mf
  \end{array}$$
then the linear operator $M$ is invertible and bi-continuous  and   $\mathcal{K}_{\alpha,\beta}=M^{-1}\circ\mathcal{K}_{\alpha,\beta_0}$. Thus we can assume that $m=0$ i.e. $\beta=\beta_0$.\\

The proof of the main result is the aim of the following section. Then as a consequence, we will prove that for $\alpha\in\mathbb N$ and $\beta\in]-1,0[$, the zeros set of $\mathbb{K}_{\alpha,\beta}$ is a totally real submanifold of $\mathbb D^*\times\mathbb D^*$ with real dimension one  formed by at most $(\alpha+1)$ connected components. This set is reduced to one connected component for $\beta$ closed to $-1$ ($\beta\to (-1)^+$) and it is empty for $\beta$ near $0$  ($\beta\to0^-$). These zeros are related to the zeros set $\mathcal Z_{Q_{\alpha,\beta}}$ of $Q_{\alpha,\beta}$ in $\mathbb C$. Hence we will concentrate essentially on the distribution of $\mathcal Z_{Q_{\alpha,\beta}}$. This will be the aim of the third section of the paper where we start by a general study and we conclude that $\mathcal Z_{Q_{\alpha,\beta}}$ is formed by exactly $(\alpha+1)$ connected regular curves when $\beta$ varies in the interval $]-1,0[$.\\
We finish the paper by some open problems. Using Python software, some numerical results are investigated in the annex of the paper where we confirm numerically some asymptotic results.

\section{Proof of the main result} The proof of the first case is simple (it was done in \cite{Gh-Za}) however, the proof of the second one is more delicate and it will be done by steps. Using the sequence $(e_n)_{n\geq -m}$ given by \eqref{eq1.1}, we deduce that the reproducing kernel of $\mathcal A_{\alpha,\beta}^2(\mathbb D)$ can be written as follows
   $$
    \begin{array}{lcl}
         \mathbb{K}_{\alpha,\beta}(w,z)&=&\ds\sum_{n=-m}^{+\infty}e_n(w)\overline{e_n(z)}=\sum_{n=-m}^{+\infty}\frac{\mathscr B(\alpha+1,\beta+1)}{\mathscr B(\alpha+1,n+\beta+1)}w^n\overline{z}^n\\
          &=&\ds\frac{\mathscr B(\alpha+1,\beta+1)}{(w\overline{z})^m}\sum_{n=0}^{+\infty}\frac{1}{\mathscr B(\alpha+1,n+\beta-m+1)}(w\overline{z})^n\\
          &=&\ds \frac{\mathcal{R}_{\alpha,\beta}(w\overline{z})}{(w\overline{z})^m}=:\mathcal{K}_{\alpha,\beta}(w\overline{z})
      \end{array}$$
      where $$\mathcal{R}_{\alpha,\beta}(\xi)=\mathscr B(\alpha+1,\beta+1)\sum_{n=0}^{+\infty}\frac{\xi^n}{\mathscr B(\alpha+1,n+\beta-m+1)}.$$
 If $\beta=m\in\mathbb N$, then
    $$\begin{array}{lcl}
         \mathcal{R}_{\alpha,m}(\xi)&=&\ds\mathscr B(\alpha+1,m+1)\sum_{n=0}^{+\infty}\frac{\xi^n}{\mathscr B(\alpha+1,n+1)} \\
          &=&\ds\frac{(\alpha+1)\mathscr B(\alpha+1,m+1)}{(1-\xi)^{2+\alpha}} .
      \end{array}$$
We consider now the case $\beta\in]m-1,m[$ with $m\in\mathbb N$ and we prove the result in two steps:\\
    \textbf{$\bullet$ First step: the case $\mathbf{\alpha\in\mathbb N}$}. We start by proving the following preliminary lemma.
    \begin{lem}\label{l1} We have
        $$\ds \mathcal{R}_{\alpha,\beta}(\xi)=\frac{Q_{\alpha,\beta}(\xi)}{(1-\xi)^{2+\alpha}}$$ where $Q_{\alpha,\beta}$ is a polynomial of degree $\alpha+1$ with  $Q_{\alpha,\beta}(1)\neq 0$ that satisfies the recurrence formula:
$$Q_{\alpha+1,\beta}(\xi)=\frac{1}{\alpha+\beta+2}\left[\xi(1-\xi)Q_{\alpha,\beta}'(\xi)+(\alpha+\beta-m+2+(m-\beta)\xi)Q_{\alpha,\beta}(\xi)\right].$$
    \end{lem}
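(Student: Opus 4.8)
The plan is to establish all three assertions of the lemma (the rational form, the degree of $Q_{\alpha,\beta}$ together with $Q_{\alpha,\beta}(1)\neq0$, and the recurrence) simultaneously by induction on $\alpha\in\mathbb N$, with the recurrence itself serving as the engine of the induction. First I would rewrite the coefficients of $\mathcal R_{\alpha,\beta}$ through Gamma functions: since $\beta-m=\beta_0\in\,]-1,0[$,
$$\frac{1}{\mathscr B(\alpha+1,n+\beta-m+1)}=\frac{\Gamma(\alpha+n+\beta-m+2)}{\alpha!\,\Gamma(n+\beta-m+1)}=\frac{1}{\alpha!}\prod_{j=1}^{\alpha+1}(n+\beta-m+j),$$
which for $\alpha\in\mathbb N$ is a polynomial in $n$ of degree $\alpha+1$. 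In particular these coefficients grow only polynomially, so the series defining $\mathcal R_{\alpha,\beta}$ converges to a holomorphic function on $\mathbb D$ and term-by-term differentiation is legitimate there.

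For the base case $\alpha=0$ the coefficient is simply $n+\beta_0+1$, and a direct summation using $\sum_n\xi^n=(1-\xi)^{-1}$ and $\sum_n n\xi^n=\xi(1-\xi)^{-2}$ gives $\mathcal R_{0,\beta}(\xi)=\mathscr B(1,\beta+1)\dfrac{(\beta_0+1)-\beta_0\xi}{(1-\xi)^{2}}$, so that $Q_{0,\beta}(\xi)=\mathscr B(1,\beta+1)\big[(\beta_0+1)-\beta_0\xi\big]$ has degree $1=\alpha+1$ and $Q_{0,\beta}(1)=\mathscr B(1,\beta+1)\neq0$. The inductive step rests on two elementary ratios of Beta functions,
$$\frac{\mathscr B(\alpha+1,n+\beta-m+1)}{\mathscr B(\alpha+2,n+\beta-m+1)}=\frac{n+\alpha+\beta-m+2}{\alpha+1},\qquad \frac{\mathscr B(\alpha+2,\beta+1)}{\mathscr B(\alpha+1,\beta+1)}=\frac{\alpha+1}{\alpha+\beta+2}.$$
Setting $S_\alpha=\mathcal R_{\alpha,\beta}/\mathscr B(\alpha+1,\beta+1)$, the first ratio multiplies the coefficient of $\xi^n$ by $\frac{n+\alpha+\beta-m+2}{\alpha+1}$; recognizing the factor $n$ as the action of $\xi\,\frac{d}{d\xi}$ yields $S_{\alpha+1}=\frac{1}{\alpha+1}\big[(\alpha+\beta-m+2)S_\alpha+\xi S_\alpha'\big]$, and combining with the second ratio gives
$$\mathcal R_{\alpha+1,\beta}=\frac{1}{\alpha+\beta+2}\big[(\alpha+\beta-m+2)\mathcal R_{\alpha,\beta}+\xi\,\mathcal R_{\alpha,\beta}'\big].$$
Substituting the inductive form $\mathcal R_{\alpha,\beta}=Q_{\alpha,\beta}/(1-\xi)^{2+\alpha}$, computing $\mathcal R_{\alpha,\beta}'$, and collecting everything over $(1-\xi)^{3+\alpha}$ reproduces exactly the claimed recurrence for $Q_{\alpha+1,\beta}$; in particular $\mathcal R_{\alpha+1,\beta}$ again has the asserted rational shape with a polynomial numerator.

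It then remains to propagate the two qualitative properties along the recurrence. Evaluating it at $\xi=1$ annihilates the $\xi(1-\xi)Q_{\alpha,\beta}'$ term and leaves $Q_{\alpha+1,\beta}(1)=\frac{\alpha+2}{\alpha+\beta+2}\,Q_{\alpha,\beta}(1)$, so $Q_{\alpha,\beta}(1)\neq0$ is preserved. The one point I expect to require genuine care is the degree: both $\xi(1-\xi)Q_{\alpha,\beta}'$ and $(m-\beta)\xi\,Q_{\alpha,\beta}$ contribute in degree $\alpha+2$, and one must check that their leading terms do not cancel. A short computation shows that the coefficient of $\xi^{\alpha+2}$ in $Q_{\alpha+1,\beta}$ equals the leading coefficient of $Q_{\alpha,\beta}$ times $\frac{m-\beta-\alpha-1}{\alpha+\beta+2}$, and since $\beta\in\,]m-1,m[$ and $\alpha\geq0$ one has $m-\beta-\alpha-1\in\,]-\alpha-1,-\alpha[$, which is strictly negative and hence nonzero. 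Thus the degree increases by exactly one at each step, closing the induction and yielding $\deg Q_{\alpha,\beta}=\alpha+1$.
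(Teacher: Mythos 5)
Your proposal is correct and follows essentially the same route as the paper's proof: induction on $\alpha$, with the base case $\alpha=0$ computed by direct summation, the inductive step obtained from the Beta-function ratio identities together with the operator $\xi\frac{d}{d\xi}$ to get $\mathcal R_{\alpha+1,\beta}=\frac{1}{\alpha+\beta+2}\bigl(\xi\mathcal R_{\alpha,\beta}'+(\alpha+\beta-m+2)\mathcal R_{\alpha,\beta}\bigr)$, and the nonvanishing at $\xi=1$ read off from the recurrence. The one place you go beyond the paper is the verification that the leading terms of $\xi(1-\xi)Q_{\alpha,\beta}'$ and $(m-\beta)\xi Q_{\alpha,\beta}$ do not cancel (leading coefficient multiplied by $\frac{m-\beta-\alpha-1}{\alpha+\beta+2}\neq0$), a point the paper asserts without checking; this is a worthwhile addition rather than a change of method.
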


\begin{proof}
If $\alpha=0$, then we have
    $$\begin{array}{lcl}
         \mathcal{R}_{0,\beta}(\xi)&=&\ds\mathscr B(1,\beta+1)\sum_{n=0}^{+\infty}\frac{\xi^n}{\mathscr B(1,n+\beta-m+1)} \\ &=&\ds\frac{1}{\beta+1}\sum_{n=0}^{+\infty}(n+\beta-m+1)\xi^n =\frac{Q_{0,\beta}(\xi)}{(1-\xi)^2}
      \end{array}
    $$
    with $$Q_{0,\beta}(\xi)=\frac{1}{\beta+1}((m-\beta)\xi+\beta-m+1).$$
    Assume that the result is proved for $\alpha\in\mathbb N$ i.e. $$\ds \mathcal{R}_{\alpha,\beta}(\xi)=\frac{Q_{\alpha,\beta}(\xi)}{(1-\xi)^{2+\alpha}}$$ where $Q_{\alpha,\beta}$ is a polynomial of degree $\alpha+1$ with $Q_{\alpha,\beta}(1)\neq0$.
    $$\begin{array}{lcl}
        \mathcal{R}_{\alpha+1,\beta}(\xi)&=&\ds\mathscr B(\alpha+2,\beta+1)\sum_{n=0}^{+\infty}\frac{\xi^n}{\mathscr B(\alpha+2,n+\beta-m+1)}\\
        &=&\ds \mathscr B(\alpha+2,\beta+1)\sum_{n=0}^{+\infty}\frac{\Gamma(\alpha+3+n+\beta-m)}{\Gamma(\alpha+2)\Gamma(n+\beta-m+1)}\xi^n\\
        &=&\ds \frac{\mathscr B(\alpha+1,\beta+1)}{\alpha+\beta+2}\sum_{n=0}^{+\infty}\frac{(\alpha+2+n+\beta-m)}{\mathscr B(\alpha+1,n+\beta-m+1)}\xi^n\\
        &=&\ds\frac{1}{\alpha+\beta+2}\left(\xi\mathcal{R}_{\alpha,\beta}'(\xi)+(\alpha+\beta-m+2)\mathcal{R}_{\alpha,\beta}(\xi)\right)\\
        &=&\ds\frac{1}{\alpha+\beta+2}\left(\xi \frac{Q_{\alpha,\beta}'(\xi)}{(1-\xi)^{2+\alpha}} +\xi \frac{(2+\alpha)Q_{\alpha,\beta}(\xi)}{(1-\xi)^{3+\alpha}}\right.\\
        &&\ds \hfill\left.+(\alpha+\beta-m+2)\frac{Q_{\alpha,\beta}(\xi)}{(1-\xi)^{2+\alpha}}\right)\\
        &=&\ds\frac{Q_{\alpha+1,\beta}(\xi)}{(1-\xi)^{3+\alpha}},
      \end{array}
    $$
      with
      $$Q_{\alpha+1,\beta}(\xi)=\frac{\xi(1-\xi)Q_{\alpha,\beta}'(\xi)+(\alpha+\beta-m+2+(m-\beta)\xi)Q_{\alpha,\beta}(\xi)}{\alpha+\beta+2}.$$

      Thus $Q_{\alpha+1,\beta}$ is a polynomial of degree $\alpha+2$ and $$Q_{\alpha+1,\beta}(1)=\frac{\alpha+2}{\alpha+\beta+2}Q_{\alpha,\beta}(1)\neq0.$$
\end{proof}
Now, we can deduce the proof of Theorem \ref{th1} in the case $\alpha\in\mathbb N$. This will be done by induction on $\alpha$. The result is true for $\alpha=0$. Indeed, we have
$$Q_{0,\beta}(\xi)=\frac{1}{\beta+1}(1+\beta_0-\beta_0\xi)=\beta_0\frac{\mathscr B(1,\beta+1)}{\mathscr B(1,\beta_0+1)} \left(\frac{1}{\beta_0}-\frac{\xi}{1+\beta_0}\right).$$
Assume that the result is true until the value $\alpha$. Thanks to Lemma \ref{l1}, we have
$$\begin{array}{ll}
   & Q_{\alpha+1,\beta}(\xi)=\ds\frac{1}{\alpha+\beta+2}\left(\xi(1-\xi)Q_{\alpha,\beta}'(\xi)+(\alpha+2+\beta_0-\beta_0\xi)Q_{\alpha,\beta}(\xi)\right)\\
=&\ds\frac{\beta_0\mathscr B(\alpha+1,\beta+1)}{(\alpha+\beta+2)\mathscr B(\alpha+1,\beta_0+1)}\left[\sum_{j=1}^{\alpha+1}j\frac{(-1)^{j}}{j+\beta_0}{\alpha+1\choose j}\xi^j\right.\\
& \ds +\sum_{j=1}^{\alpha+2}(j-1)\frac{(-1)^{j+1}}{j-1+\beta_0}{\alpha+1\choose j-1}\xi^j+(\alpha+2+\beta_0)\sum_{j=0}^{\alpha+1}\frac{(-1)^{j}}{j+\beta_0}{\alpha+1\choose j}\xi^j\\
 &\ds \hfill\left.+\beta_0\sum_{j=1}^{\alpha+2}\frac{(-1)^{j}}{j-1+\beta_0}{\alpha+1\choose j-1}\xi^j\right]\\
=&\ds\beta_0\frac{\mathscr B(\alpha+2,\beta+1)}{\mathscr B(\alpha+2,\beta_0+1)}\sum_{j=0}^{\alpha+2}\frac{(-\xi)^{j}}{j+\beta_0}{\alpha+2\choose j}.
  \end{array}
$$

This achieves the first step.\\
\textbf{$\bullet$ Second step: The general case ($\alpha>-1$).} \\
We set
  $$\begin{array}{lcl}
       S_{\alpha,\beta_0}(\xi)&:=&\ds\frac{\mathscr B(\alpha+1,\beta_0+1)}{\beta_0\mathscr B(\alpha+1,\beta+1)}Q_{\alpha,\beta}(\xi)\\
       &=&\ds\frac{(1-\xi)^{\alpha+2}}{\beta_0}\sum_{n=0}^{+\infty}\frac{\mathscr B(\alpha+1,\beta_0+1)}{\mathscr B(\alpha+1,n+\beta_0+1)}\xi^n\\
       &=&\ds \frac{Q_{\alpha,\beta_0}(\xi)}{\beta_0}
    \end{array}
  $$
  and
  $$G_{\alpha,\beta_0}(\xi):=\sum_{n=0}^{+\infty}\frac{(-1)^n}{n+\beta_0}{\alpha+1\choose n}\xi^n.$$
  To prove the result it suffices to attest that $S_{\alpha,\beta_0}=G_{\alpha,\beta_0}$ on $\mathbb D$. To show this equality we will prove that both functions $S_{\alpha,\beta_0}$ and $G_{\alpha,\beta_0}$ satisfy the following differential equation:
    \begin{equation}\label{z1}
        \xi F'(\xi)=-\beta_0 F(\xi)+(1-\xi)^{\alpha+1},\quad \forall\; \xi\in\mathbb D.
    \end{equation}
  It follows that $S_{\alpha,\beta_0}-G_{\alpha,\beta_0}$ satisfies on $\mathbb D^* $ the homogenous differential equation:
    $\xi F'(\xi)=-\beta_0 F(\xi)$. In particular it satisfies the same homogenous differential equation on $]0,1[$. Thus there  exists a constant $\sigma\in\mathbb R$ such that for every $t\in]0,1[$ we have $S_{\alpha,\beta_0}(t)-G_{\alpha,\beta_0}(t)=\sigma t^{-\beta_0}$. Since $S_{\alpha,\beta_0}-G_{\alpha,\beta_0}$ is differentiable at $0$, we get $\sigma=0 $ i.e $S_{\alpha,\beta_0}=G_{\alpha,\beta_0}$ on $]0,1[$ and by the analytic extension principle we conclude that $S_{\alpha,\beta_0}=G_{\alpha,\beta_0}$ on $\mathbb D$.\\
  To finish the proof we will show that both functions $S_{\alpha,\beta_0}$ and $G_{\alpha,\beta_0}$ satisfy the differential equation (\ref{z1}). For $G_{\alpha,\beta_0}$ the result is obvious. Indeed
  $$\begin{array}{lcl}
       \xi G'_{\alpha,\beta_0}(\xi)&=&\ds\sum_{n=0}^{+\infty}\frac{n}{n+\beta_0}{\alpha+1\choose n}(-\xi)^n\\
       &=&\ds\sum_{n=0}^{+\infty}\left(1-\frac{\beta_0}{n+\beta_0}\right){\alpha+1\choose n}(-\xi)^n\\
       &=&\ds (1-\xi)^{\alpha+1}-\beta_0G_{\alpha,\beta_0}(\xi).
    \end{array}
  $$
  Now for $S_{\alpha,\beta_0}$, it is not hard to prove that
  $$\begin{array}{lcl}
       \xi S'_{\alpha,\beta_0}(\xi)&=&\ds-(1-\xi)^{\alpha+1}\sum_{n=0}^{+\infty}\frac{(\alpha+1)\mathscr B(\alpha+1,\beta_0+1)}{(\alpha+1+n+\beta_0)\mathscr B(\alpha+1,n+\beta_0+1)}\xi^n\\
       &=&\ds (1-\xi)^{\alpha+1}-\beta_0S_{\alpha,\beta_0}(\xi).
    \end{array}
  $$
    Thus the proof of Theorem \ref{th1} is finished.\\

    As a first consequence of Theorem \ref{th1}, we obtain the following identity:
    \begin{cor}
        Let $-1<\alpha<+\infty$ and $-1<\beta<0$. For every $n\in\mathbb N$,
        $$\sum_{k=0}^n {\alpha+2\choose k}\frac{(-1)^k}{\mathscr B(\alpha+1,n-k+\beta+1)}=\frac{\beta}{\mathscr B(\alpha+1,\beta+1)}{\alpha+1\choose n}\frac{(-1)^n}{n+\beta}.$$
    \end{cor}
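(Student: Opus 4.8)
The plan is to read the identity off as an equality of Taylor coefficients coming from two different expressions for the same function $Q_{\alpha,\beta}$. Since $-1<\beta<0$ forces $m=\lfloor\beta\rfloor+1=0$ and hence $\beta_0=\beta-m=\beta$, Theorem \ref{th1} gives directly
$$Q_{\alpha,\beta}(\xi)=\beta\sum_{n=0}^{+\infty}\frac{(-1)^n}{n+\beta}{\alpha+1\choose n}\xi^n,$$
so that the coefficient of $\xi^n$ on this side equals $\beta\,(-1)^n{\alpha+1\choose n}/(n+\beta)$. Note that this is precisely the right-hand side of the claimed identity multiplied by $\mathscr B(\alpha+1,\beta+1)$, which already indicates what the comparison should produce.

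First I would record the second representation of $Q_{\alpha,\beta}$. From the relation $\mathcal K_{\alpha,\beta}(\xi)=\mathcal R_{\alpha,\beta}(\xi)=Q_{\alpha,\beta}(\xi)/(1-\xi)^{2+\alpha}$, which is valid on $\mathbb D$ for every $\alpha>-1$ once $m=0$ (this is exactly the content of the second step in the proof of Theorem \ref{th1}, where $S_{\alpha,\beta_0}=Q_{\alpha,\beta_0}/\beta_0$ is expanded), one gets
$$Q_{\alpha,\beta}(\xi)=(1-\xi)^{2+\alpha}\,\mathscr B(\alpha+1,\beta+1)\sum_{j=0}^{+\infty}\frac{\xi^j}{\mathscr B(\alpha+1,j+\beta+1)}.$$
Next I would expand $(1-\xi)^{2+\alpha}=\sum_{k\ge0}{\alpha+2\choose k}(-1)^k\xi^k$ by the generalized binomial series and form the Cauchy product with the series above; reading off the coefficient of $\xi^n$ (that is, summing over $k+j=n$) yields
$$\mathscr B(\alpha+1,\beta+1)\sum_{k=0}^{n}{\alpha+2\choose k}\frac{(-1)^k}{\mathscr B(\alpha+1,n-k+\beta+1)}.$$
Comparing this with the coefficient computed from Theorem \ref{th1} and dividing through by $\mathscr B(\alpha+1,\beta+1)$ gives exactly the stated identity.

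The argument is therefore essentially bookkeeping, and I do not expect a genuine obstacle. The only points that deserve a line of justification are the use of the \emph{generalized} binomial expansion of $(1-\xi)^{2+\alpha}$ when $\alpha\notin\mathbb N$, and the legitimacy of the Cauchy product; both are immediate because every power series in sight converges absolutely on $\mathbb D$, after which the uniqueness of Taylor coefficients forces the term-by-term equality and hence the identity.
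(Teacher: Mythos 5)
Your proposal is correct and takes essentially the same route as the paper's own proof: the paper likewise invokes the identity $S_{\alpha,\beta}=G_{\alpha,\beta}$ from Theorem \ref{th1} (your two representations of $Q_{\alpha,\beta}$, which differ from the paper's only by the harmless factor $\beta/\mathscr B(\alpha+1,\beta+1)$), expands $(1-\xi)^{\alpha+2}$ by the generalized binomial series, forms the Cauchy product, and reads off the coefficient of $\xi^n$. There is nothing to add beyond the justifications you already note.
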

    \begin{proof}
        Thanks to Theorem \ref{th1}, we have
        $$\begin{array}{l}
            S_{\alpha,\beta}(\xi)=\ds \sum_{n=0}^{+\infty}\frac{(-1)^n}{n+\beta}{\alpha+1\choose n}\xi^n\\
            =\ds\frac{\mathscr B(\alpha+1,\beta+1)}{\beta}(1-\xi)^{\alpha+2}\sum_{n=0}^{+\infty}\frac{\xi^n}{\mathscr B(\alpha+1,n+\beta+1)}\\
            =\ds \frac{\mathscr B(\alpha+1,\beta+1)}{\beta}\left[\sum_{n=0}^{+\infty}{\alpha+2\choose n}(-\xi)^n\right]\left[\sum_{n=0}^{+\infty}\frac{\xi^n}{\mathscr B(\alpha+1,n+\beta+1)}\right]\\
            =\ds\frac{\mathscr B(\alpha+1,\beta+1)}{\beta}\sum_{n=0}^{+\infty}d_n\xi^n
        \end{array}$$
        where
        $$d_n=\sum_{k=0}^{n}{\alpha+2\choose k}\frac{(-1)^k}{\mathscr B(\alpha+1,n-k+\beta+1)}.$$
        So the result follows.
    \end{proof}
    Using the proof of Theorem \ref{th1}, one can conclude the following corollary:
    \begin{cor}\label{cor2}
        For every $-1<\alpha$ and $-1<\beta<0$, the function $$G_{\alpha,\beta}(\xi)=\beta^{-1}Q_{\alpha,\beta}(\xi)=\sum_{n=0}^{+\infty}\frac{(-1)^n}{n+\beta}{\alpha+1\choose n}\xi^n$$ satisfies:
        $$\xi G'_{\alpha,\beta}(\xi)=\ds (1-\xi)^{\alpha+1}-\beta G_{\alpha,\beta}(\xi)$$
        and
         $$\begin{array}{lcl}
             G_{\alpha+1,\beta}(\xi)&=&\ds\frac{1}{\alpha+\beta+2}\left(\xi(1-\xi)G_{\alpha,\beta}'(\xi)+(\alpha+\beta+2-\beta\xi) G_{\alpha,\beta}(\xi)\right)\\
         &=&\ds\frac{1}{\alpha+\beta+2}\left((\alpha+2)G_{\alpha,\beta}(\xi)+(1-\xi)^{\alpha+2}\right).
           \end{array}$$
    \end{cor}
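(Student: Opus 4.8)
The plan is to assemble all three identities from facts already proved, since both $G_{\alpha,\beta}$ and the recurrence governing it are essentially contained in the second step of the proof of Theorem~\ref{th1}. The hypothesis $-1<\beta<0$ forces $m=\lfloor\beta\rfloor+1=0$ and hence $\beta=\beta_0$, so the function $G_{\alpha,\beta}$ of the statement is literally the function $G_{\alpha,\beta_0}$ considered there.

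First I would record that the differential equation $\xi G'_{\alpha,\beta}(\xi)=(1-\xi)^{\alpha+1}-\beta G_{\alpha,\beta}(\xi)$ is nothing but equation~\eqref{z1} established for $G_{\alpha,\beta_0}$ in the proof of Theorem~\ref{th1}: differentiating the series of $G_{\alpha,\beta}$ termwise, multiplying by $\xi$, splitting $\frac{n}{n+\beta}=1-\frac{\beta}{n+\beta}$ and recognizing $\sum_{n\geq0}\binom{\alpha+1}{n}(-\xi)^n=(1-\xi)^{\alpha+1}$ yields the claim for every $\alpha>-1$. So this first assertion requires no new argument.

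For the recurrence I would prove the \emph{second} (closed) form first, because it is the one that holds coefficient by coefficient. Equating the coefficients of $\xi^n$ on the two sides of $(\alpha+\beta+2)G_{\alpha+1,\beta}(\xi)=(\alpha+2)G_{\alpha,\beta}(\xi)+(1-\xi)^{\alpha+2}$ and then multiplying through by $(-1)^n(n+\beta)$ reduces the whole statement to the single binomial identity
$$(\alpha+2-n)\binom{\alpha+2}{n}=(\alpha+2)\binom{\alpha+1}{n},$$
valid for every real $\alpha$ (it is the generalized Pascal relation, obtained by cancelling the common factors of the two falling products). This identity is the crux: it is exactly what lets me handle arbitrary $\alpha>-1$ rather than only $\alpha\in\mathbb N$, the range in which the polynomial recurrence of Lemma~\ref{l1} would already give the first form after dividing by $\beta$.

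Finally the \emph{first} form of the recurrence follows from the second together with the differential equation, with no further coefficient computation. Subtracting the two candidate right-hand sides for $G_{\alpha+1,\beta}$ and inserting $\xi G'_{\alpha,\beta}(\xi)+\beta G_{\alpha,\beta}(\xi)=(1-\xi)^{\alpha+1}$ produces
$$\frac{1}{\alpha+\beta+2}\big[(1-\xi)\big(\xi G'_{\alpha,\beta}(\xi)+\beta G_{\alpha,\beta}(\xi)\big)-(1-\xi)^{\alpha+2}\big]=0,$$
so the two expressions agree. No step is genuinely hard here; the only care needed is to keep every binomial manipulation valid for non-integer $\alpha$, after which the corollary is a direct assembly of the differential equation and one elementary binomial identity.
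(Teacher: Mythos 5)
Your proof is correct, but for the recurrence it follows a genuinely different route from the one the paper has in mind. The paper offers no explicit proof: the corollary is presented as a byproduct of the proof of Theorem~\ref{th1}, where the differential equation is exactly equation~\eqref{z1} verified for $G_{\alpha,\beta_0}$ (your first step is that same computation), and the recurrence in its first, differential form is intended to come from the Gamma-function manipulation in the proof of Lemma~\ref{l1} --- a computation which, although the lemma is stated and proved by induction only for $\alpha\in\mathbb N$, never uses integrality in the step relating $\mathcal R_{\alpha+1,\beta}$ to $\mathcal R_{\alpha,\beta}$ --- combined with the identification $Q_{\alpha,\beta}=\beta G_{\alpha,\beta}$ furnished by Theorem~\ref{th1}; the closed form then follows from the ODE. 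You reverse this order: you prove the closed form $(\alpha+\beta+2)G_{\alpha+1,\beta}(\xi)=(\alpha+2)G_{\alpha,\beta}(\xi)+(1-\xi)^{\alpha+2}$ first, by equating Taylor coefficients and reducing everything to the identity $(\alpha+2-n)\binom{\alpha+2}{n}=(\alpha+2)\binom{\alpha+1}{n}$ (valid for all real $\alpha$, as cancellation of falling factorials shows; it is usually called the absorption identity rather than the Pascal relation, but your justification is sound), and only then deduce the differential form from the ODE, via the correct cancellation $(1-\xi)\bigl(\xi G'_{\alpha,\beta}+\beta G_{\alpha,\beta}\bigr)=(1-\xi)^{\alpha+2}$. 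Your route is more elementary and self-contained: it needs nothing beyond the series defining $G_{\alpha,\beta}$ and the generalized binomial theorem, and it makes transparent why the statement holds for every $\alpha>-1$ even though Lemma~\ref{l1} as written covers only integer $\alpha$. What the paper's route buys is economy within its own development: the $\mathcal R$-recurrence is already on the page, so no new identity needs to be checked. Both arguments are complete.
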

    \begin{rems} $ $
    \begin{enumerate}
        \item Using the Stirling formula, one can prove that $G_{\alpha,\beta}$ is bounded  on the closed unit disk $\overline{\mathbb D}$. This fact will be used frequently in the hole of the paper.
      \item Thanks to Lemma \ref{l1}, for $\alpha\in\mathbb N$, one has $G_{\alpha,\beta}(1)\neq 0$. For the general case, if $G_{\alpha_0,\beta}(1)\neq 0$ for some $-1<\alpha_0\leq0$ then $G_{\alpha_0+n,\beta}(1)\neq 0$ for every $n\in\mathbb N$.
    \end{enumerate}
  \end{rems}
           In the rest of the paper, we assume that $G_{\alpha,\beta}(1)\neq 0$. This may be true for any $-1<\alpha$ and $-1<\beta<0$.
\section{Zeros of Bergman kernels}
    Using Theorem \ref{th1}, the function $\mathcal K_{\alpha,0}$ has no zero in the unit disk $\mathbb D$. However if $-1<\beta<0$ then $\mathcal K_{\alpha,\beta}$ may have some zeros in $\mathbb D$. We claim that if $\xi\in\mathbb D^*$ is a zero of $\mathcal K_{\alpha,\beta}$ then the sets $\{(z,w)\in\mathbb D^2;\ w\overline{z}=\xi\}$ and $\{(z,w)\in\mathbb D^2;\ z\overline{w}=\xi\}$ define two totally real algebraic surfaces (of real dimension equal to $2$) of $\mathbb C^2$ that are contained in the zeros set of the Bergman kernel $\mathbb K_{\alpha,\beta}$. Thus it suffices to study the zeros set of $\mathcal K_{\alpha,\beta}$.\\
    Due to an algebraic problem, we focus sometimes on the case $\alpha\in\mathbb N$, because in this case the zeros of $\mathcal K_{\alpha,\beta}$ are given by the zeros of the polynomial $G_{\alpha,\beta}$ contained in $\mathbb D$. Thus for $\alpha\in\mathbb N$, we will study the zeros set of $G_{\alpha,\beta}$ in the hole complex plane $\mathbb C$. It is interesting to discuss the variations of these sets in terms of the parameter $\beta$. All results on $G_{\alpha,\beta}$ can be viewed as particular cases of those of the following linear transformation.
\subsection{The Linear transformation $T_\beta$}
    If $\mathcal O(\mathbb D(0,R))$ is the space of holomorphic function  on the disk $\mathbb D(0,R)$ and $-1<\beta<0$, then we define $T_\beta$  on $\mathcal O(\mathbb D(0,R))$ by $$T_\beta f(z)=\sum_{n=0}^{+\infty}\frac{a_n}{n+\beta}z^n$$ for any $f(z)=\sum_{n=0}^{+\infty}a_nz^n$. The transformation $T_\beta$ is linear and bijective from $\mathcal O(\mathbb D(0,R))$ onto itself. It transforms any polynomial to a polynomial with the same degree. We start by the study of zeros of $T_\beta f$ in general then we specialize the study to the case $f(z)=P_\alpha(z)=(1-z)^{\alpha+1}$ where $T_\beta P_\alpha$ is exactly $G_{\alpha,\beta}$.

    \begin{theo}\label{th2}
    Let $0<R\leq+\infty$ and $f$ be a holomorphic function on $\mathbb D(0,R)$ such that $(f(0),f'(0))\neq(0,0)$. Then for every $0<r_0<R$, there exist $$-1<\beta_1(f,r_0)\leq -\frac{|f(0)|}{|f(0)|+r_0|f'(0)|}\leq \beta_2(f,r_0)<0$$ that depend on $f$ and $r_0$ such that the function $T_\beta f$ has no zero in $\mathbb D(0,r_0)$ for every $\beta_2(f,r_0)<\beta<0$ and has exactly one simple zero in $\mathbb D(0,r_0)$ for every $-1<\beta<\beta_1(f,r_0)$.
\end{theo}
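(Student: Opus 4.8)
The plan is to analyze $T_\beta f$ in the two limiting regimes $\beta\to 0^-$ and $\beta\to(-1)^+$ by extracting the correct normalizations, and then to pin the transition constant by comparison with the linear part of $f$. Write $f(z)=\sum_{n\ge0}a_nz^n$, so that $a_0=f(0)$ and $a_1=f'(0)$, and observe first that $T_\beta f$ obeys the first-order differential equation $z(T_\beta f)'(z)+\beta\,T_\beta f(z)=f(z)$, which generalizes the relation of Corollary~\ref{cor2} (take $f=P_\alpha$, so that $T_\beta P_\alpha=G_{\alpha,\beta}$). This identity is convenient for simplicity statements: at any zero $z_*\neq0$ of $T_\beta f$ with $f(z_*)\neq0$ one gets $(T_\beta f)'(z_*)=f(z_*)/z_*\neq0$, so such a zero is automatically simple. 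Throughout I shall treat the principal case $a_0\neq0$ and $a_1\neq0$; when one of them vanishes the constant $\gamma:=-|a_0|/(|a_0|+r_0|a_1|)$ degenerates to an endpoint ($\gamma=0$ if $a_0=0$, $\gamma=-1$ if $a_1=0$) and the corresponding assertion becomes vacuous, so nothing is lost.

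First I would handle $\beta\to0^-$. Fixing $\rho$ with $r_0<\rho<R$ gives $\sum_{n\ge1}|a_n|r_0^n=:M<\infty$, and for $\beta\in(-\tfrac12,0)$ one has $n+\beta\ge\tfrac12$ for $n\ge1$, whence $\big|\beta\,T_\beta f(z)-a_0\big|\le\sum_{n\ge1}\tfrac{|\beta|}{n+\beta}|a_n|r_0^n\le 2|\beta|M$ uniformly on $\overline{\mathbb D(0,r_0)}$. Thus $\beta\,T_\beta f\to a_0$ uniformly on the closed disk, and since $a_0\neq0$ there is $\beta_2'\in(-1,0)$ with $|\beta\,T_\beta f|\ge|a_0|/2>0$ on $\overline{\mathbb D(0,r_0)}$, hence $T_\beta f$ has no zero there, for all $\beta\in(\beta_2',0)$.

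Next I would treat $\beta\to(-1)^+$, where the index-one coefficient dominates. Multiplying by $(1+\beta)$ isolates it: $(1+\beta)T_\beta f(z)=\tfrac{(1+\beta)a_0}{\beta}+a_1z+(1+\beta)\sum_{n\ge2}\tfrac{a_n}{n+\beta}z^n$. The first term is $O(1+\beta)$, and for $\beta\in(-1,-\tfrac12)$ one has $n+\beta\ge1$ for $n\ge2$, so the tail is bounded by $(1+\beta)M$; hence $(1+\beta)T_\beta f\to a_1z$ uniformly on $\overline{\mathbb D(0,r_0)}$. The limit $a_1z$ has exactly one zero (simple, at the origin) in $\mathbb D(0,r_0)$ and satisfies $|a_1z|=|a_1|r_0>0$ on the circle $|z|=r_0$. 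By Rouch\'e's theorem applied on this circle there is $\beta_1'\in(-1,0)$ such that, for every $\beta\in(-1,\beta_1')$, the function $(1+\beta)T_\beta f$ --- and therefore $T_\beta f$ --- has exactly one zero counted with multiplicity, hence simple, in $\mathbb D(0,r_0)$; simplicity is also confirmed by the differential equation, since that zero tends to $0$ while $f\to a_0\neq0$ there.

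Finally I would pin $\gamma$ by comparison with the linear truncation $L_\beta(z)=\tfrac{a_0}{\beta}+\tfrac{a_1}{1+\beta}z$, whose unique zero $z_\beta=-a_0(1+\beta)/(a_1\beta)$ has modulus $|a_0|(1+\beta)/(|a_1|\,|\beta|)$; a direct computation gives $|z_\beta|>r_0$ exactly when $\beta>\gamma$, so $\gamma$ is precisely the value at which the zero of the linear model crosses $\partial\mathbb D(0,r_0)$. Setting $\beta_2(f,r_0):=\max(\gamma,\beta_2')$ and $\beta_1(f,r_0):=\min(\gamma,\beta_1')$ then yields $-1<\beta_1\le\gamma\le\beta_2<0$ with the required counts, since $(\beta_2,0)\subseteq(\beta_2',0)$ and $(-1,\beta_1)\subseteq(-1,\beta_1')$. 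The part I expect to be most delicate is securing the two uniform limits on the \emph{closed} disk with control that is uniform in $\beta$ over the relevant one-sided neighbourhoods (the tail estimates must be genuinely uniform), together with the bookkeeping that makes $\gamma$ the sharp separating value; once these are in place the Rouch\'e count and the simplicity are routine.
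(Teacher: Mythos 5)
Your proposal is correct, but it is organized along a genuinely different route from the paper's proof. The paper makes a \emph{single} Rouch\'e comparison, valid for all $\beta$ at once, between $T_\beta f$ and the linear model $F_{\beta,f}(z)=\frac{a_0}{\beta}+\frac{a_1}{1+\beta}z$ on the circle $|z|=r_0$: it introduces the one real function
$\psi(\beta)=\left|\frac{|a_0|}{\beta}+\frac{|a_1|r_0}{1+\beta}\right|-\sum_{n\geq2}\frac{|a_n|}{n+\beta}r_0^n$
(the minimum modulus of $F_{\beta,f}$ on the circle minus the tail bound), observes that $\psi(\gamma)<0$ at $\gamma=-\frac{|a_0|}{|a_0|+r_0|a_1|}$ precisely because the zero of $F_{\beta,f}$ lies on the circle at that value of $\beta$, while $\psi\to+\infty$ at both endpoints of $]-1,0[$; the thresholds $\beta_1\leq\gamma\leq\beta_2$ thus emerge intrinsically from the set $\{\psi>0\}$, and the zero count is inherited from $F_{\beta,f}$, whose unique zero $-\frac{a_0(1+\beta)}{a_1\beta}$ lies outside $\mathbb D(0,r_0)$ for $\beta>\gamma$ and inside for $\beta<\gamma$. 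You instead analyze two one-sided regimes with different normalizations, $\beta T_\beta f\to a_0$ as $\beta\to0^-$ and $(1+\beta)T_\beta f\to a_1z$ as $\beta\to(-1)^+$, uniformly on $\overline{\mathbb D(0,r_0)}$ (these limits are exactly the content of Lemma~\ref{l3} in the special case $f=P_\alpha$), you read off the counts from them (a direct lower bound near $0$, Rouch\'e against $a_1z$ near $-1$), and only afterwards force the sandwich by setting $\beta_1=\min(\gamma,\beta_1')$ and $\beta_2=\max(\gamma,\beta_2')$; your computation that the zero of the linear model crosses $\partial\mathbb D(0,r_0)$ exactly at $\beta=\gamma$ is motivation only and is never used logically, and the max/min step is legitimate since both conclusions persist when the respective intervals shrink. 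Your restriction to $a_0\neq0\neq a_1$ matches the paper, which handles the degenerate cases in the remarks surrounding the theorem. What the paper's organization buys is that $\gamma$ appears as the genuine breakdown point of the comparison and that $\beta_1,\beta_2$ are governed by the sign of one explicit function $\psi$, which is what gives meaning to the numerical values in Table~\ref{t1}; what your organization buys is a more elementary argument near $0^-$ (no Rouch\'e needed there) and uniform-convergence statements of independent interest, at the price that your thresholds carry no structural relation to $\gamma$.
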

 When $f(0)=0$ and $f'(0)\neq0$ the result is reduced to " \emph{$0$ is the unique zero (simple) of the function $T_\beta f$ in $\mathbb D(0,r_0)$ for every $-1<\beta<0$} ". However, when $f'(0)=0$ and $f(0)\neq0$ then " \emph{the function $T_\beta f$ has no zero in $\mathbb D(0,r_0)$ for every $-1<\beta<0$}. "
\begin{proof}
If $f(z)=\sum_{n=0}^{+\infty}a_nz^n$ for every $z\in\mathbb D(0,R)$ with $(a_0,a_1)\neq (0,0)$ then we  set
    $$F_{\beta,f}(z)=\frac{a_0}{\beta}+\frac{a_1}{1+\beta}z.$$
    If $|z|=r_0$ we have
    $$\left|T_\beta f(z)-F_{\beta,f}(z)\right|\leq \sum_{n=2}^{+\infty} \frac{|a_n|}{n+\beta}r_0^n.$$
    Moreover, if we set
    $$\psi(\beta)=\left|\frac{|a_0|}{\beta}+\frac{|a_1|r_0}{1+\beta}\right|-\sum_{n=2}^{+\infty} \frac{|a_n|}{n+\beta}r_0^n$$
    then $$\psi\left(-\frac{|a_0|}{|a_0|+r_0|a_1|}\right)<0\  and \lim_{\beta\to0^-}\psi(\beta)=+\infty\ (resp.\ \lim_{\beta\to(-1)^+}\psi(\beta)=+\infty)$$
    when $a_0\neq0$ (resp. $a_1\neq0$). It follows that there exist $$-1<\beta_1\leq -\frac{|a_0|}{|a_0|+r_0|a_1|}\leq \beta_2<0$$ that depend on $f$ and $r_0$ such that for every $\beta\in]-1,\beta_1[\cup]\beta_2,0[$ one has $\psi(\beta)>0$.  Hence, for every $\beta\in]-1,\beta_1[\cup]\beta_2,0[$ and $|z|=r_0$, we have $|T_\beta f(z)-F_{\beta,f}(z)|<|F_{\beta,f}(z)|.$
    Thus by Rouch\'e Theorem, $T_\beta f$ and $F_{\beta,f}$ have the same number of zeros counted with their multiplicities in the disk $\mathbb D(0,r_0)$.
\end{proof}
    In the following lemma we collect some useful properties of  $T_\beta f$ that will be used frequently in the sequel.
    \begin{lem}
    If $f$ is a holomorphic function on $\mathbb D(0,R)$ and $-1<\beta<0$ then the following  assertions hold
    \begin{enumerate}
      \item The number $0$ is a zero of $f$ if and only if it is a zero of $T_\beta f$ (with the same multiplicity).
      \item The derivative of $T_\beta f$ satisfies $$z(T_\beta f)'(z)=f(z)-\beta T_\beta f(z),\quad \forall\; z\in\mathbb D(0,R).$$
      \item The functions $f$ and $T_\beta f$ have a commun zero in $\mathbb D^*(0,R)$  if and only if $f\equiv0$.
      \item The function $T_\beta f$ has a zero in  $\mathbb D^*(0,R)$ with multiplicity greater or equal to $2$ if and only if $f\equiv0$.
    \end{enumerate}
    \end{lem}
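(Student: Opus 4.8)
The plan is to establish the four assertions in order, using (1) and (2) as the algebraic engine behind (3) and (4). For (1), I would simply compare Taylor coefficients: if $f(z)=\sum_{n\ge0}a_nz^n$, the $n$-th coefficient of $T_\beta f$ is $a_n/(n+\beta)$, and since $-1<\beta<0$ we have $n+\beta\neq0$ for every $n\ge0$ (namely $\beta\neq0$ at $n=0$ and $n+\beta>0$ for $n\ge1$). Hence $a_n=0$ if and only if $a_n/(n+\beta)=0$, so $f$ and $T_\beta f$ vanish to the same order at the origin. For (2), I would differentiate the defining series termwise on $\mathbb D(0,R)$ and use $n/(n+\beta)=1-\beta/(n+\beta)$, which rewrites $z(T_\beta f)'(z)=\sum_{n\ge1}\frac{na_n}{n+\beta}z^n$ as $f(z)-\beta\,T_\beta f(z)$. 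Both are routine.

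The substance is in (3) and (4), and my first step is to use (2) to see that they describe a single phenomenon. Writing $g:=T_\beta f$: if $z_0\in\mathbb D^*(0,R)$ is a common zero of $f$ and $g$, then evaluating (2) at $z_0$ gives $z_0g'(z_0)=f(z_0)-\beta g(z_0)=0$, so $g'(z_0)=0$ (as $z_0\neq0$) and $z_0$ is a zero of $g$ of order $\ge2$; conversely, if $g$ has a zero of order $\ge2$ at $z_0\neq0$, then $g(z_0)=g'(z_0)=0$ and (2) forces $f(z_0)=z_0g'(z_0)+\beta g(z_0)=0$, so $z_0$ is a common zero. Since the reverse implication of each (taking $f\equiv0$) is trivial, both (3) and (4) reduce to the single claim: \emph{if $g=T_\beta f$ has a zero of order $\ge2$ at some $z_0\neq0$, then $f\equiv0$}.

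The hard part will be exactly this forward implication, and I would attack it through the relation $zg'+\beta g=f$ of (2). Factoring $g=(z-z_0)^kh$ with $h(z_0)\neq0$ and $k=\operatorname{ord}_{z_0}g$ and substituting shows $\operatorname{ord}_{z_0}f=k-1$, so a common zero is precisely a zero of $g$ of order $k\ge2$. To turn this into $f\equiv0$ I would try to globalize: on a disk about $z_0$ missing the origin the relation integrates to $(z^\beta g)'=z^{\beta-1}f$, and I would seek an analytic-continuation/uniqueness argument starting from the multiple zero. I expect this to be the genuine obstacle, because the local information is self-consistent: differentiating (2) yields $zg^{(n+1)}+(n+\beta)g^{(n)}=f^{(n)}$ for all $n$, and a finite-order zero satisfies every one of these relations without the vanishing propagating. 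Moreover, the already-noted bijectivity of $T_\beta$ on $\mathcal O(\mathbb D(0,R))$ means $g=T_\beta f$ can a priori be an arbitrary holomorphic function, so the forward implication cannot rest on the local relation alone; closing it will require a genuinely global argument, and I would expect it to go through only once the specific structure of the functions in play (such as $f=P_\alpha=(1-z)^{\alpha+1}$, for which $T_\beta P_\alpha=G_{\alpha,\beta}$) or an extra hypothesis ruling out a multiple zero is brought in. This reconciliation is the crux of the proof.
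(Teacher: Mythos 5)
Your proofs of parts (1) and (2) are correct, and your reduction of (3) and (4) to the single implication ``if $g:=T_\beta f$ has a zero of order $\ge 2$ at some $z_0\neq 0$ then $f\equiv 0$'' is exactly right. For comparison: the paper states this lemma with \emph{no proof at all}, so there is nothing on its side to check your reduction against. The real finding is the obstacle you flagged at the end, and you should have pushed your own observation one step further: the forward implication does not merely resist a local argument, it is \emph{false}. Since $T_\beta$ is bijective with inverse given by your identity (2), namely $T_\beta^{-1}g=zg'+\beta g$, you may prescribe $g$ freely. Take any $z_0\in\mathbb D^*(0,R)$ and $g(z)=(z-z_0)^2$; then
\[
f(z)=zg'(z)+\beta g(z)=(2+\beta)z^2-2(1+\beta)z_0z+\beta z_0^2
\]
satisfies $T_\beta f(z)=(z-z_0)^2$ (check the coefficients: $a_0/\beta=z_0^2$, $a_1/(1+\beta)=-2z_0$, $a_2/(2+\beta)=1$), so $T_\beta f$ has a double zero at $z_0$ and $f(z_0)=(\beta-2(1+\beta)+2+\beta)z_0^2=0$, yet $f\not\equiv 0$. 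This refutes both (3) and (4) as stated. Indeed your own factorization computation, $\operatorname{ord}_{z_0}f=k-1$ when $\operatorname{ord}_{z_0}g=k$, already shows the situation is self-consistent for every $k\ge 2$; no global argument can rescue it because every $g$ with a multiple zero off the origin arises as $T_\beta f$ for some $f\not\equiv0$.

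So the correct conclusion of your analysis is not ``the crux is missing'' but ``the lemma needs a hypothesis,'' and your closing guess about the specific structure of the functions in play is the right repair. The statement that is both true and sufficient for the paper's later use is: if $f$ has no zero in $\mathbb D^*(0,R)$ --- as with $f=P_\alpha=(1-z)^{\alpha+1}$, whose only zero is $z=1$ --- then by (2) every zero $z_0\in\mathbb D^*(0,R)$ of $T_\beta f$ satisfies $z_0(T_\beta f)'(z_0)=f(z_0)\neq0$, hence is simple and is not a common zero with $f$. That weaker statement is precisely what the subsequent Proposition needs when it asserts $\frac{\partial H_f}{\partial z}(\beta,z)=\frac{1}{z}f(z)\neq0$ on $\mathscr D_f$ before invoking the implicit function theorem (and what the Remark's claim that all zeros of $H_f$ are simple needs); as written, both inherit the same flaw for general $f$, while remaining valid for the kernels $G_{\alpha,\beta}=T_\beta P_\alpha$ the paper actually studies. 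In short: your (1), (2) and the equivalence (3)$\Leftrightarrow$(4) are correct and complete; the remaining implication is unprovable because false, and the two-line counterexample above is what your honest hedge should have become.
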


    Now we consider a fixed holomorphic function $f$ on $\mathbb D(0,R)$ with $f(0)\neq 0$. We set
    $$H_f(\beta,z):=T_\beta f(z)=\sum_{n=0}^{+\infty}\frac{a_n}{n+\beta}z^n$$   for $(\beta,z)\in]-1,0[\times \mathbb D(0,R)$ and $$\mathscr D_f:=\{(\beta,z)\in]-1,0[\times \mathbb D(0,R);\ H_f(\beta,z)=0\}.$$
    We assume that the set $\mathscr D_f$ is not empty. Indeed if $f\equiv c$ is a constant function then $T_\beta f\equiv \frac{c}{\beta}$, thus $\mathscr D_c=\emptyset$ if $c\neq0$ and $\mathscr D_c=\mathbb C$ if $c=0$. Moreover it is easy to find some examples of non constant holomorphic functions $g$ where $T_\beta g$  has no zero for some value of $\beta$. But we don't know if there exists a non constant function $g$ such that $\mathscr D_g$ is empty.
    \begin{prop}
        The set $\mathscr D_f$ is a submanifold of (real) dimension one in $\mathbb R^3$  formed by at most countable connected components $(\mathcal Y_{f,k})_k$.\\
        If $\mathcal Y$ is a connected component of $\mathscr D_f$  then there exist $-1\leq a_{\mathcal Y}<b_{\mathcal Y}\leq0$ and  a $\mathcal C^\infty-$function $\mathcal X:]a_{\mathcal Y},b_{\mathcal Y}[\longrightarrow \mathbb D(0,R)$ such that $$\ds \mathcal Y=Graph(\mathcal X):=\{(\beta,\mathcal X(\beta));\ \beta\in ]a_{\mathcal Y},b_{\mathcal Y}[\}.$$
        Moreover for every $\beta\in ]a_{\mathcal Y},b_{\mathcal Y}[$, one has
        \begin{equation}\label{eq3.4}
            \mathcal X'(\beta)=\frac{\mathcal X(\beta)}{f(\mathcal X(\beta))}\sum_{n=0}^{+\infty}  \frac{a_n}{(n+\beta)^2}(\mathcal X(\beta))^n.
        \end{equation}
    \end{prop}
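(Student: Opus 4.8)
The plan is to realize $\mathscr D_f$ as a regular level set of the smooth map $H_f$ and to read off both the manifold structure and the graph description from the implicit function theorem. First I would check that $H_f\colon\,]-1,0[\times\mathbb D(0,R)\longrightarrow\mathbb C$ is jointly $C^\infty$ (indeed real-analytic): it is holomorphic in $z$ for each fixed $\beta$, while $\beta\mapsto 1/(n+\beta)$ is smooth on $]-1,0[$ and the series $\ds\sum_{n\ge0}a_nz^n/(n+\beta)$, together with all its termwise derivatives, converges locally uniformly on the domain. Viewing the target $\mathbb C$ as $\mathbb R^2$ and the source as an open subset of $\mathbb R^3$, the set $\mathscr D_f=H_f^{-1}(0)$ will be a one-dimensional $C^\infty$ submanifold as soon as $0$ is a regular value.

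The regularity of the value $0$ is where the foregoing lemma does the work. Since $f(0)=a_0\neq0$, part (1) of that lemma shows that $0$ is never a zero of $T_\beta f$, so every point of $\mathscr D_f$ has $z\in\mathbb D^*(0,R)$; part (4) then forces each such zero to be simple, i.e. $\partial_z H_f(\beta,z)=(T_\beta f)'(z)\neq0$. Because $H_f$ is holomorphic in $z$ we have $\partial_{\overline z}H_f\equiv0$, so the real differential of $H_f$ in the $z$-variables is multiplication by the nonzero complex number $(T_\beta f)'(z)$, a real-linear isomorphism of $\mathbb R^2$. Hence the differential $DH_f$ already has rank $2$ at every point of $\mathscr D_f$, so $0$ is a regular value and $\mathscr D_f$ is a $C^\infty$ submanifold of real dimension $3-2=1$. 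Being a subspace of $\mathbb R^3$ it is second countable, whence its connected components, which are open in $\mathscr D_f$, form an at most countable family $(\mathcal Y_{f,k})_k$.

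For the graph description I would apply the implicit function theorem in the form that solves for $z$: since $\partial_z H_f\neq0$, near any point of $\mathscr D_f$ there is a unique $C^\infty$ solution $z=\mathcal X(\beta)$ of $H_f(\beta,z)=0$, so each component is locally a graph over the $\beta$-axis. To upgrade this to a global graph I would differentiate $H_f=0$ along $\mathscr D_f$: the relation $\partial_\beta H_f\,d\beta+\partial_z H_f\,dz=0$ shows that a tangent vector with $d\beta=0$ must also have $dz=0$, which is impossible for a nonzero tangent vector; thus $\beta$ restricts to a local diffeomorphism (equivalently $\beta'\neq0$) on each component. A connected $1$-manifold without boundary is diffeomorphic to $\mathbb R$ or $S^1$, and the strict monotonicity of $\beta$ rules out the compact case and makes $\beta$ injective on each component. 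Consequently every $\mathcal Y$ is the graph of a $C^\infty$ function $\mathcal X$ over an open interval $]a_{\mathcal Y},b_{\mathcal Y}[\,\subseteq\,]-1,0[$.

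Finally, formula \eqref{eq3.4} comes from differentiating the identity $H_f(\beta,\mathcal X(\beta))=0$, which gives $\mathcal X'(\beta)=-\partial_\beta H_f/\partial_z H_f$. Termwise differentiation yields $\ds\partial_\beta H_f=-\sum_{n\ge0}a_n(\mathcal X(\beta))^n/(n+\beta)^2$, while part (2) of the last lemma, evaluated at the zero $z=\mathcal X(\beta)$ where $T_\beta f(z)=0$, gives $z(T_\beta f)'(z)=f(z)$, hence $\partial_z H_f=f(\mathcal X(\beta))/\mathcal X(\beta)$; this division is legitimate because $\mathcal X(\beta)\neq0$ and, by part (3) applied to $f\not\equiv0$, $f(\mathcal X(\beta))\neq0$ as well. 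Combining these two expressions produces exactly \eqref{eq3.4}. I expect the only genuinely delicate point to be the passage from local to global graphs, that is, the exclusion of closed-loop ($S^1$) components, which the monotonicity of $\beta$ settles; everything else reduces to the cited lemma and routine differentiation.
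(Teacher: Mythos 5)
Your proof is correct and follows essentially the same route as the paper: the key point in both is that, by the preceding lemma, $\partial_z H_f(\beta,z)=(T_\beta f)'(z)=f(z)/z\neq0$ at every point of $\mathscr D_f$ (using $f(0)\neq0$ so that $z\neq0$, and the non-vanishing of $f$ at zeros of $T_\beta f$), after which the implicit function theorem yields the one-dimensional submanifold structure, the graph description, and formula \eqref{eq3.4} by implicit differentiation. Your write-up merely fills in details the paper leaves implicit (regular-value formulation, countability of components, and the local-to-global graph argument ruling out $S^1$ components via monotonicity of $\beta$), all of which are handled soundly.
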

    \begin{proof}
        For every $(\beta,z)\in\mathscr D_\alpha$ we have
        $$\frac{\partial H_\alpha}{\partial z}(\beta,z)=(T_\beta f)'(z)=\frac{1}{z}f(z)\neq0.$$
        The result follows using the implicit functions theorem.
    \end{proof}
    It is easy to see that if $0<R<+\infty$ then $a_{\mathcal Y}>-1$ for  all connected components $\mathcal Y$ of $\mathscr D_f$ except the unique component $\mathcal Y_{f,0}$ given by Theorem \ref{th2} where $a_{\mathcal Y_{f,0}}=-1$. However,  $b_{\mathcal Y}=0$ if and only if $R=+\infty$ i.e. $f$ is an entire function. In this case, of entire functions,  all functions $\mathcal X_{f,k}$ are defined on $]-1,0[$.
    \begin{rem}
    Using the same proof, the previous result can be improved to the complex case  as follows:\\
    If we set $\Omega:=\{\beta\in\mathbb C;\ -1<\Re e(\beta)<0\}$ and $$\mathcal D_f:=\{(\beta,z)\in\Omega\times \mathbb D(0,R);\ H_f(\beta,z)=0\}$$ then $\mathcal D_f$ is a submanifold of (complex) dimension  one in $\Omega\times\mathbb D(0,R)$ formed by connected components. Thus, the Lelong number of the current $[\mathcal D_f]$ of integration over $\mathcal D_f$ is equal to one at every point of $\mathcal D_f$. (This is due to the fact that all zeros of $H_f$ are simple).
    \end{rem}
     The following theorem gives the asymptotic behaviors of functions  $\mathcal X_f$ near $-1$ and $0$ when $f$ is a polynomial. We claim that if $f(z)=a_0+a_1z$ then the solution is explicitly determined by $$\mathcal X_f(\beta)=-\frac{a_0}{a_1}\frac{\beta+1}{\beta}.$$
     Hence we will consider the case when $deg(f)\geq2$.
    \begin{theo}\label{th3}
        Let $f(z)=\sum_{n=0}^pa_nz^n$ be a polynomial of degree $p\geq 2$ with $f(0)\neq0$. We set $a_n=|a_n|e^{i\theta_n}$ for any $0\leq n\leq p$. The set $\mathscr D_f$ is formed exactly by $p$ connected components $(\mathcal Y_{f,k})_{0\leq k\leq p-1}$ with the corresponding functions $\mathcal X_{f,k}:]-1,0[\longrightarrow \mathbb C$. Again we keep $\mathcal X_{f,0}$ to indicate the function related to the unique component given by Theorem \ref{th2}.
        \begin{enumerate}
          \item For every $0\leq k\leq p-1$, we have $\lim_{\beta\to 0^-}|\mathcal X_{f,k}(\beta)|=+\infty$ and
            \begin{equation}\label{eq3.5}
                \mathcal X_{f,k}(\beta)\underset{\beta\to 0^-}\sim \left(-\frac{p|a_0|}{\beta|a_p|}\right)^{\frac{1}{p}}e^{i\frac{\theta_0-\theta_p+2j_k\pi}{p}}
            \end{equation}
            where $j_k\in\mathbb Z$ that depends on $k$.
          \item If $f'(0)\neq 0$ then for every $1\leq k\leq p-1$
            $$\lim_{\beta\to (-1)^+}|\mathcal X_{f,k}(\beta)|=+\infty\quad  and \quad \lim_{\beta\to (-1)^+}\mathcal X_{f,0}(\beta)=0.$$
            Moreover, we have
            \begin{equation}\label{eq3.7}
                \mathcal X_{f,0}(\beta)\underset{\beta\to (-1)^+}\sim \frac{a_0}{a_1}(1+\beta).
            \end{equation}
            and
            \begin{equation}\label{eq3.6}
                \mathcal X_{f,k}(\beta)\underset{\beta\to(-1)^+}\sim \left(\frac{(p-1)|a_1|}{(1+\beta)|a_p|}\right)^{\frac{1}{p-1}}e^{\frac{i(\theta_1-\theta_p+(2s_k+1)\pi)}{p-1}},  \quad \forall \; 1\leq k\leq p-1
            \end{equation}
            for some $s_k\in\mathbb Z$ that depends on $k$.
        \end{enumerate}
\end{theo}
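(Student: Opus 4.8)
The plan is to use the fact that for each fixed $\beta\in]-1,0[$ the map $z\mapsto T_\beta f(z)=\sum_{n=0}^p\frac{a_n}{n+\beta}z^n$ is a genuine polynomial of degree $p$, with leading coefficient $\frac{a_p}{p+\beta}\neq0$, and to control its $p$ roots as $\beta$ runs to each endpoint. First I would settle the number of components. Since $f(0)=a_0\neq0$, assertion (1) of the lemma above shows that $0$ is not a zero of $T_\beta f$, while assertion (4) shows that every zero of $T_\beta f$ in $\mathbb D^*$ is simple; hence for each $\beta$ the polynomial $T_\beta f$ has exactly $p$ distinct nonzero roots. Because the roots stay simple they never collide, and because $\frac{a_p}{p+\beta}$ is bounded away from $0$ on compact subintervals of $]-1,0[$ they remain bounded there; by the implicit function theorem (as already used for \eqref{eq3.4}) they depend smoothly on $\beta$ and can be labelled globally as $p$ smooth functions $\mathcal X_{f,k}:]-1,0[\to\mathbb C^*$. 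This yields exactly $p$ connected components, each a graph over the whole interval, in accordance with the remark that all $\mathcal X_{f,k}$ are defined on $]-1,0[$ for entire $f$.

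For the behaviour as $\beta\to0^-$ I would rescale. Put $z=\lambda(\beta)\,w$ with $\lambda(\beta)^{p}=-\frac{(p+\beta)a_0}{\beta a_p}$ for a fixed branch, and normalise by forming $\frac{\beta}{a_0}T_\beta f(\lambda w)$. The coefficient of $w^{0}$ then equals $1$, that of $w^{p}$ equals $-1$, while for $1\le n\le p-1$ the coefficient has modulus of size $|\beta|^{(p-n)/p}\to0$; hence the normalised polynomial converges coefficientwise to $1-w^{p}$, whose roots are the $p$-th roots of unity. By Hurwitz's theorem the $p$ roots in the $w$-plane converge to these distinct roots of unity, so $\mathcal X_{f,k}(\beta)=\lambda(\beta)w_k(\beta)$ with $|w_k(\beta)|\to1$ and $|\lambda(\beta)|\to+\infty$. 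Reading off $\lambda(\beta)^{p}\sim-\frac{p a_0}{\beta a_p}$ and splitting into modulus and argument (using $-p/\beta>0$) gives \eqref{eq3.5}, the integer $j_k$ recording to which root of unity the branch $k$ limits.

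The passage $\beta\to(-1)^+$ (with $a_1\neq0$) is the delicate point, and I would treat it by the same device centred on the $n=1$ term. Rescaling $z=\mu(\beta)w$ with $\mu(\beta)^{p-1}=-\frac{(p+\beta)a_1}{(1+\beta)a_p}$ and normalising $\frac{1+\beta}{a_1\mu}T_\beta f(\mu w)$, the coefficient of $w^{1}$ tends to $1$, that of $w^{p}$ to $-1$, and all others to $0$; the limit polynomial is $w-w^{p}=w(1-w^{p-1})$. By Hurwitz, $p-1$ of the roots converge to the $(p-1)$-th roots of unity, which gives $|\mathcal X_{f,k}|\to+\infty$ and, after extracting modulus and argument from $\mu(\beta)^{p-1}\sim-\frac{(p-1)a_1}{(1+\beta)a_p}$, the formula \eqref{eq3.6} for $1\le k\le p-1$; the minus sign in $\mu^{p-1}$ produces the $(2s_k+1)\pi$ shift. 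The remaining root tends to $w=0$, and it is precisely the branch $\mathcal X_{f,0}$ singled out by Theorem \ref{th2} as the unique zero in a small disk for $\beta$ near $-1$. To pin down its rate I would expand $0=T_\beta f(z)=\frac{a_0}{\beta}+\frac{a_1}{1+\beta}z+O(z^2)$ near $z=0$ and solve to leading order, obtaining $\mathcal X_{f,0}(\beta)\sim\frac{a_0}{a_1}(1+\beta)$, i.e. \eqref{eq3.7}.

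The main obstacle is the bookkeeping in the two rescaling steps: one must verify the coefficientwise convergence through the modulus estimates of size $|\beta|^{(p-n)/p}$ and $(1+\beta)^{(p-n)/(p-1)}$ carefully enough to invoke Hurwitz, cleanly separate at the left endpoint the single root tending to $0$ from the $p-1$ roots tending to $\infty$ (matching the former with the branch of Theorem \ref{th2}), and correctly match the resulting root-of-unity phases with the stated integers $j_k$ and $s_k$. The modulus asymptotics and the root-of-unity structure are the substantive content; the argument bookkeeping is routine once these are in place.
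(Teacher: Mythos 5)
Your proof is correct, and it reaches the conclusion by a genuinely different mechanism than the paper. The paper works directly on the root equation $\frac{a_0}{\beta}+\sum_{n\geq1}\frac{a_n}{n+\beta}\,\mathcal X^n=0$: since the coefficient $a_0/\beta$ (resp. $a_1/(1+\beta)$) explodes as $\beta\to0^-$ (resp. $\beta\to(-1)^+$) while all other coefficients stay bounded, every branch must tend to $\infty$ (resp. to $0$ or $\infty$, with Theorem \ref{th2} forcing exactly one branch, $\mathcal X_{f,0}$, to tend to $0$); the asymptotics then come from the dominant balance between the exploding term and the top-degree term, namely $-a_0/\beta\sim\frac{a_p}{p+\beta}\mathcal X^p$ and $\frac{a_1}{1+\beta}\mathcal X\sim-\frac{a_p}{p+\beta}\mathcal X^{p}$, after which \eqref{eq3.5}, \eqref{eq3.7} and \eqref{eq3.6} follow by extracting $p$-th and $(p-1)$-th roots. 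Your rescaling-plus-Hurwitz argument packages that dominant-balance step differently: the normalized polynomials converge coefficientwise (your exponents $|\beta|^{(p-n)/p}$ and $(1+\beta)^{(p-n)/(p-1)}$ are right) to $1-w^p$ and $w(1-w^{p-1})$, and Hurwitz plus continuity of the branches gives the limits. This buys two things the paper leaves implicit: first, the convergence of the \emph{argument} of each branch (the paper's passage from $\mathcal X^p\sim c/\beta$ to $\mathcal X\sim|c/\beta|^{1/p}e^{i(\cdot)}$ tacitly needs each branch to settle on a single $p$-th root, which in your setup is the statement that each $w_k(\beta)$ converges to one root of unity, forced by connectedness of its accumulation set inside a finite set); second, an explicit proof of the component count (simple nonzero roots, IFT, boundedness on compact subintervals), which the paper only asserts via its general proposition and the remark on entire functions. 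Both proofs use Theorem \ref{th2} identically, to single out the branch tending to $0$ at $(-1)^+$; and your closing expansion $0=\frac{a_0}{\beta}+\frac{a_1}{1+\beta}z+O(z^2)$ is exactly the paper's derivation of \eqref{eq3.7}. The remaining bookkeeping you flag (matching phases to the integers $j_k$, $s_k$) is indeed routine.
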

If $f(0)\neq 0$ and $f'(0)=0$ then all functions $\mathcal X_{f,k}$ are bounded near $-1$.
\begin{proof}
    Let $0\leq k\leq p-1 $.  As $a_0\neq0$ then using the equality
        $$\frac{a_0}{\beta}+\sum_{n=1}^{p}\frac{a_n}{n+\beta}(\mathcal X_{f,k}(\beta))^n=0$$
    we obtain
        $$\lim_{\beta\to 0^-}\mathcal X_{f,k}(\beta)=\infty$$
    and
        $$-\frac{a_0}{\beta}\underset{\beta\to 0^-}\sim \frac{a_p}{p+\beta}(\mathcal X_{f,k}(\beta))^p.$$
    That means
        $$(\mathcal X_{f,k}(\beta))^p\underset{\beta\to 0^-}\sim     -\frac{pa_0}{a_p\beta}$$
    so we get Equation \eqref{eq3.5}.\\

    With the same way if $a_1\neq0$ then for every $0\leq k\leq p-1 $ we have $$\lim_{\beta\to (-1)^+}\mathcal X_{f,k}(\beta)\in\{0,\infty\}.$$ Thanks to Theorem \ref{th2}, $$\lim_{\beta\to (-1)^+}\mathcal X_{f,0}(\beta)=0\quad and\quad \lim_{\beta\to (-1)^+}\mathcal X_{f,k}(\beta)=\infty,\quad \forall\; 1\leq k\leq p-1.$$
    Thus, we obtain
        $$\mathcal X_{f,0}(\beta)\underset{\beta\to (-1)^+}\sim-\frac{a_0}{a_1}\frac{1+\beta}{\beta}.$$
    Therefore, Equation \eqref{eq3.7} follows.\\
    For $1\leq k\leq p-1 $ we obtain
        $$\frac{a_1}{1+\beta}\underset{\beta\to (-1)^+}\sim -\frac{a_p}{p+\beta}(\mathcal X_{f,k}(\beta))^{p-1}$$
    thus,
        $$(\mathcal X_{f,k}(\beta))^{p-1}\underset{\beta\to (-1)^+}\sim -\frac{(p-1)a_1}{a_p(1+\beta)}$$
    and Equation \eqref{eq3.6} follows.
\end{proof}
\subsection{Application on the Bergman kernels}
As mentioned before, for any $\alpha\in\mathbb N$, $T_\beta P_\alpha=G_{\alpha,\beta}$ where $P_\alpha(z)= (1-z)^{\alpha+1}$. Hence all previous results are valid and more precisions are needed to accomplish the study of $ \mathcal X_{\alpha,k}:=\mathcal X_{P_\alpha,k},\ 0\leq k\leq \alpha$. We start by claiming that if $x<0$ then there exists $\beta_x\in]-1,0[$ such that $G_{\alpha,\beta_x}(x)=0$. It follows that $(\beta_x,x)$ is in a component (says $\mathcal Y_{\alpha,0}$) of $\mathscr D_\alpha:=\mathscr D_{P_\alpha}$. Hence, the corresponding  function $\mathcal X_{\alpha,0}$ maps $]-1,0[$ onto $]-\infty,0[$. Indeed we have $\mathcal X_{\alpha,0}'(\beta)<0$ for every $\beta\in]-1,0[$ thus it is a decreasing function and
$$\lim_{\beta\to0^-} \mathcal X_{\alpha,0}(\beta)=-\infty,\quad \lim_{\beta\to(-1)^+} \mathcal X_{\alpha,0}(r)=0.$$ Using Corollary \ref{cor2}, we can deduce that $\mathcal X_{\alpha,0}(\beta)\geq \mathcal X_{\alpha+1,0}(\beta)$ for every  $\beta\in]-1,0[$ (See Figure \ref{fig1}).
\begin{rem}
    For every $\alpha\in\mathbb N$, we set $-1<s_\alpha<0$ the unique  solution of $\mathcal X_{\alpha,0}(\beta)=-1$. The polynomial $G_{\alpha,\beta}$ has no zero in $]-1,0[$ for every $s_\alpha<\beta<0$ and has exactly one simple zero in $]-1,0[$ for every $-1<\beta<s_\alpha$.
\end{rem}
We claim that $(s_\alpha)_\alpha$ is an increasing  sequence (See again Figure \ref{fig1}).\\

\begin{figure}[h]
  \includegraphics[scale=0.2]{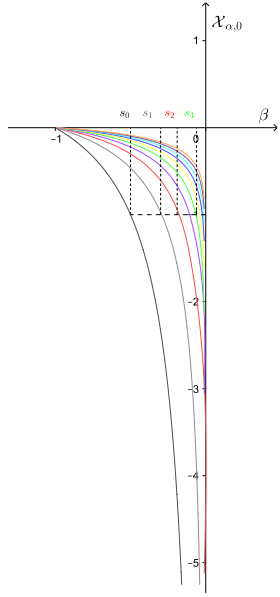}\\
  \caption{Graphs of $\mathcal X_{\alpha,0}$ for $0\leq \alpha\leq 9$.}\label{fig1}
\end{figure}

The following lemma explain differently the conclusion of Theorem \ref{th2} in the current statement (See Table \ref{t1} for numerical values of $\beta_1$ and $\beta_2$ given by Theorem \ref{th2} for this example).
\begin{lem}\label{l3}
    For every $\alpha>-1$, the family of functions $(\beta(1+\beta)G_{\alpha,\beta})_{\beta\in]-1,0[}$ converges uniformly on $\mathbb D$ to the constant $1$ (resp. to the polynomial $(\alpha+1)\xi$) as $\beta\to 0^-$ (resp. as $\beta\to (-1)^+$).\\
    In particular, for every $m\in\mathbb N$ (resp. $m\in\mathbb N^*$) the family of kernels $(\mathcal K_{\alpha,\beta})_ {\beta\in]m-1,m[}$ converges uniformly on every compact subset of $\mathbb D^*$ to $\mathcal K_{\alpha,m}$ (resp. to $\mathcal K_{\alpha,m-1}$) as $\beta\to m^-$ (resp. as $\beta\to (m-1)^+$).
\end{lem}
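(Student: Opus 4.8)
The plan is to prove the statement on $\beta(1+\beta)G_{\alpha,\beta}$ first, in fact uniformly on $\overline{\mathbb D}$, and then to transfer it to the kernels through Theorem \ref{th1}. The guiding observation is that the two singular factors $\tfrac1\beta$ and $\tfrac1{1+\beta}$ live only in the $n=0$ and $n=1$ terms of $G_{\alpha,\beta}$, the rest of the series being harmless. So I would write
$$G_{\alpha,\beta}(\xi)=\frac1\beta-\frac{(\alpha+1)\xi}{1+\beta}+S_\beta(\xi),\qquad S_\beta(\xi):=\sum_{n=2}^{+\infty}\frac{(-1)^n}{n+\beta}\binom{\alpha+1}{n}\xi^n,$$
and deduce at once
$$\beta(1+\beta)G_{\alpha,\beta}(\xi)=(1+\beta)-\beta(\alpha+1)\xi+\beta(1+\beta)S_\beta(\xi).$$

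The heart of the matter is a bound on $S_\beta$ that is uniform in $\beta$. First I would check that $\sum_{n\ge1}\bigl|\binom{\alpha+1}{n}\bigr|<+\infty$ for every $\alpha>-1$: this is trivial when $\alpha+1\in\mathbb N$, and otherwise the Stirling estimate already used in the Remark following Corollary \ref{cor2} gives $\bigl|\binom{\alpha+1}{n}\bigr|\sim c_\alpha\,n^{-\alpha-2}$, which is summable since $\alpha+2>1$. Because $n+\beta>1$ for all $n\ge2$ and all $\beta\in\,]-1,0[$, and $|\xi|\le1$, this yields a single constant $C_\alpha:=\sum_{n\ge2}\bigl|\binom{\alpha+1}{n}\bigr|$ with $|S_\beta(\xi)|\le C_\alpha$ for every $\xi\in\overline{\mathbb D}$ and every $\beta\in\,]-1,0[$. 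The two limits then follow by factoring out the vanishing quantity. As $\beta\to0^-$,
$$\beta(1+\beta)G_{\alpha,\beta}(\xi)-1=\beta\bigl[1-(\alpha+1)\xi+(1+\beta)S_\beta(\xi)\bigr],$$
so the left side is $\le|\beta|(2+\alpha+C_\alpha)\to0$ uniformly in $\xi$; and as $\beta\to(-1)^+$,
$$\beta(1+\beta)G_{\alpha,\beta}(\xi)-(\alpha+1)\xi=(1+\beta)\bigl[1-(\alpha+1)\xi+\beta S_\beta(\xi)\bigr],$$
which is $\le(1+\beta)(2+\alpha+C_\alpha)\to0$ uniformly in $\xi$. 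This proves the first assertion.

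For the second assertion I would substitute into Theorem \ref{th1}. On $]m-1,m[$ one has $\beta_0=\beta-m\in\,]-1,0[$ and
$$\mathcal K_{\alpha,\beta}(\xi)=\frac{Q_{\alpha,\beta}(\xi)}{\xi^m(1-\xi)^{2+\alpha}},\qquad Q_{\alpha,\beta}=\beta_0\,\frac{\mathscr B(\alpha+1,\beta+1)}{\mathscr B(\alpha+1,\beta_0+1)}\,G_{\alpha,\beta_0}.$$
On any compact $K\subset\mathbb D^*$ the factor $\xi^m(1-\xi)^{2+\alpha}$ is bounded and bounded away from $0$, so it is enough to compute the uniform limit of $Q_{\alpha,\beta}$ on $\overline{\mathbb D}$. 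Writing $\beta_0G_{\alpha,\beta_0}=\dfrac{\beta_0(1+\beta_0)G_{\alpha,\beta_0}}{1+\beta_0}$ and using $\Gamma(x+1)=x\Gamma(x)$ to get
$$(1+\beta_0)\,\mathscr B(\alpha+1,\beta_0+1)=\frac{\Gamma(\alpha+1)\Gamma(\beta_0+2)}{\Gamma(\alpha+\beta_0+2)}\xrightarrow[\beta_0\to-1]{}1,\qquad \mathscr B(\alpha+1,\beta_0+1)\xrightarrow[\beta_0\to0]{}\frac1{\alpha+1},$$
I would combine these factors with the two uniform limits of $\beta_0(1+\beta_0)G_{\alpha,\beta_0}$ obtained above. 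As $\beta\to m^-$ (i.e. $\beta_0\to0^-$) this gives $Q_{\alpha,\beta}\to(\alpha+1)\mathscr B(\alpha+1,m+1)$ uniformly, hence $\mathcal K_{\alpha,\beta}\to\mathcal K_{\alpha,m}$; as $\beta\to(m-1)^+$ (i.e. $\beta_0\to(-1)^+$) it gives $Q_{\alpha,\beta}\to(\alpha+1)\mathscr B(\alpha+1,m)\,\xi$, hence $\mathcal K_{\alpha,\beta}\to\mathcal K_{\alpha,m-1}$, in both cases uniformly on $K$.

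The step I expect to be the main obstacle is securing the uniformity in $\beta$ of the tail bound, since this is precisely what upgrades pointwise to uniform convergence; everything rests on the summability of $\bigl|\binom{\alpha+1}{n}\bigr|$ together with the elementary observation $n+\beta>1$ for $n\ge2$, which keeps $C_\alpha$ independent of $\beta$. The remaining delicate point is the Beta-function bookkeeping near $\beta_0=-1$, where $\mathscr B(\alpha+1,\beta_0+1)$ blows up but is exactly compensated by the factor $1+\beta_0$; this is resolved cleanly by the functional equation of $\Gamma$.
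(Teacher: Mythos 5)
Your proof is correct and follows essentially the same route as the paper: the displayed identity $\beta(1+\beta)G_{\alpha,\beta}(\xi)=(1+\beta)-\beta(\alpha+1)\xi+\beta(1+\beta)S_\beta(\xi)$ together with the Stirling-based normal convergence of the tail is exactly the paper's argument, which the authors state in one line. Your additional bookkeeping for the kernel statement (the Beta-function limits via $\Gamma(x+1)=x\Gamma(x)$ and the substitution into Theorem \ref{th1}) is left implicit in the paper but is the intended deduction and is carried out correctly.
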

\begin{proof}
    The lemma is a simple consequence of the following equality:
    $$\beta(1+\beta)G_{\alpha,\beta}(\xi)=(1+\beta)-\beta(1+\alpha)\xi+\beta(1+\beta)\sum_{n=2}^{+\infty}\frac{(-\xi)^n}{n+\beta}{\alpha+1\choose n}$$
    and the fact that the series converges normally on $\mathbb D$ (obtained using the Stirling formula).
\end{proof}

\begin{table}[h]
\caption{Numerical values of $\beta_1(P_\alpha,1)$ and $\beta_2(P_\alpha,1)$ given by Theorem \ref{th2}.}\label{t1}
\begin{tabular}{lcc}
  \hline
  $\alpha$& $\beta_1(P_\alpha,1)$& $\beta_2(P_\alpha,1)$\\
  \hline
  2&$-0.381966$&$-0.177124$ \\
  3&$-0.493058$ & $-0.107610$\\
  4&$ -0.667086$& $-0.0649539$  \\
  5& $-0.793482$& $-0.0387481$ \\
  6 & $-0.870294$& $-0.0227925$\\
  7&$-0.917737$ &$-0.0132128$ \\
  8 & $-0.947843$&$-0.00755239$\\
  9&$-0.967185$&$-0.0042614$  \\
  \hline
\end{tabular}
\end{table}

The most important conclusion of this lemma is the continuity of the Bergman kernels $\mathbb K_{\alpha,\beta}$ in terms of the parameter $\beta$. Essentially the fact that the Bergman kernel $\mathbb K_{\alpha,\beta}$ converges uniformly on every compact subset of $\mathbb D^2$ to the classical Bergman kernel  $\mathbb K_{\alpha,0}$ when $\beta\to 0^-$.\\

Now we will focus on the other components of $\mathscr D_\alpha$. We use $\mathcal X_{\alpha,k},\ 0\leq k\leq \alpha$ to indicate the corresponding functions such that $\Im m(\mathcal X_{\alpha,k}(\beta))\leq 0$  for every $0\leq k\leq \lfloor\frac{\alpha+1}{2}\rfloor$ and $\mathcal X_{\alpha,\alpha+1-k}(\beta)=\overline{\mathcal X}_{\alpha,k}(\beta)$ for every $1\leq k\leq \alpha$. Theorem \ref{th3} can be written as follows:
\begin{prop}\label{prop3}
For every $\alpha\in\mathbb N$, we have
$$\left\{\begin{array}{lcl}
           \mathcal X_{\alpha,k}(\beta)&\underset{\beta\to 0^-}\sim&\ds \left(-\frac{\alpha+1}{\beta}\right)^{\frac{1}{\alpha+1}} e^{i\frac{(2k-\alpha-1)\pi}{\alpha+1}},  \quad \forall\; 0\leq k\leq\alpha\\
           \mathcal X_{\alpha,k}(\beta)&\underset{\beta\to(-1)^+}\sim &\ds \left(\frac{\alpha(\alpha+1)}{1+\beta}\right)^{\frac{1}{\alpha}}e^{\frac{i(2k-\alpha-1)\pi}{\alpha }},  \quad \forall \; 1\leq k\leq\alpha\\
           \mathcal X_{\alpha,0}(\beta)&\underset{\beta\to (-1)^+}\sim&\ds -\frac{1+\beta}{\alpha+1}.
         \end{array}\right.$$
\end{prop}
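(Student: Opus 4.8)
The plan is to obtain Proposition \ref{prop3} as the explicit specialization of Theorem \ref{th3} to the polynomial $f=P_\alpha$, and then to remove the indeterminacy in the integers $j_k,s_k$ left by that theorem by invoking the labelling convention fixed just above the statement. First I would record the coefficients of $P_\alpha(z)=(1-z)^{\alpha+1}=\sum_{n=0}^{\alpha+1}\binom{\alpha+1}{n}(-z)^n$: the degree is $p=\alpha+1$, and $a_0=1$, $a_1=-(\alpha+1)$, $a_p=(-1)^{\alpha+1}$, so that $|a_0|=|a_p|=1$, $|a_1|=\alpha+1$ and $\theta_0=0$, $\theta_1=\pi$, $\theta_p=(\alpha+1)\pi$. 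Substituting $|a_0|,|a_p|,p$ into \eqref{eq3.5} produces the modulus $\left(-\tfrac{\alpha+1}{\beta}\right)^{1/(\alpha+1)}$; substituting $|a_1|,|a_p|,p-1$ into \eqref{eq3.6} produces $\left(\tfrac{\alpha(\alpha+1)}{1+\beta}\right)^{1/\alpha}$; and since $a_0/a_1=-1/(\alpha+1)$, equation \eqref{eq3.7} gives the third line $\mathcal X_{\alpha,0}(\beta)\sim-\tfrac{1+\beta}{\alpha+1}$ outright. This settles all the moduli and the $\mathcal X_{\alpha,0}$ asymptotic near $-1$.

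The remaining and genuine content is to show that, with the indices chosen by the convention, the directional factors are exactly $e^{i(2k-\alpha-1)\pi/(\alpha+1)}$ near $0$ and $e^{i(2k-\alpha-1)\pi/\alpha}$ near $-1$; equivalently that $j_k=k$ in \eqref{eq3.5} and $s_k=k-1$ in \eqref{eq3.6}. Here I would first note that for real $\beta$ the coefficients of $G_{\alpha,\beta}=T_\beta P_\alpha$ are real, so its zero set is invariant under complex conjugation, and that by the structure proposition for $\mathscr D_\alpha$ each branch $\mathcal X_{\alpha,k}$ is a $\mathcal C^\infty$ function on $]-1,0[$ whose zeros are always simple and never collide (parts (3)--(4) of the lemma on $T_\beta f$); hence $\arg\mathcal X_{\alpha,k}(\beta)$ varies continuously and the labelling stays coherent across the whole interval, so that the near-$0$ and near-$(-1)$ asymptotics indeed refer to one and the same indexed function.

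Next I would pin the integers down by matching the limiting configurations against the convention. Formula \eqref{eq3.5} says the $\alpha+1$ branches are asymptotically the $(\alpha+1)$-th roots of the real number $\tfrac{(-1)^\alpha(\alpha+1)}{\beta}$, i.e. equally spaced points of arguments $\tfrac{(2j-\alpha-1)\pi}{\alpha+1}$, $0\le j\le\alpha$. The proposed assignment $j_k=k$ gives arguments increasing monotonically from $-\pi$ at $k=0$ — which is precisely the negative real branch $\mathcal X_{\alpha,0}$ located through Theorem \ref{th2} — up to $\tfrac{(\alpha-1)\pi}{\alpha+1}$, lying in the closed lower half-plane exactly for $k\le\tfrac{\alpha+1}{2}$ and satisfying $\tfrac{(2(\alpha+1-k)-\alpha-1)\pi}{\alpha+1}=-\tfrac{(2k-\alpha-1)\pi}{\alpha+1}$. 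Thus it is the unique labelling compatible with $\mathcal X_{\alpha,0}$ real negative, with $\Im m(\mathcal X_{\alpha,k})\le0$ for $0\le k\le\lfloor\tfrac{\alpha+1}{2}\rfloor$, and with $\mathcal X_{\alpha,\alpha+1-k}=\overline{\mathcal X}_{\alpha,k}$, forcing $j_k=k$. The identical argument applied to \eqref{eq3.6} near $-1^+$, where the $\alpha$ branches $\mathcal X_{\alpha,k}$, $1\le k\le\alpha$, fill the $\alpha$-th roots of $(-1)^{\alpha+1}\tfrac{\alpha(\alpha+1)}{1+\beta}$ while $\mathcal X_{\alpha,0}\to0$, forces $s_k=k-1$.

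The computations of the moduli and of the quotient $a_0/a_1$ are routine; the real obstacle is this final matching step, since Theorem \ref{th3} determines the asymptotic directions only modulo the relevant roots of unity, so the whole substance of Proposition \ref{prop3} lies in proving that the labelling convention selects these particular integers rather than any cyclic shift of them. Overcoming it requires using, simultaneously, the conjugation symmetry, the prior identification of $\mathcal X_{\alpha,0}$ as the distinguished real negative branch, and the continuity and non-collision of all branches to transport a single coherent labelling across $]-1,0[$.
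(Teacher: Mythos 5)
Your proposal is correct and follows essentially the paper's own route: the paper gives no separate proof of Proposition \ref{prop3}, presenting it simply as the specialization of Theorem \ref{th3} to $f=P_\alpha$ (``Theorem \ref{th3} can be written as follows''), with the phases fixed by the labelling convention stated immediately before it. Your coefficient computation ($p=\alpha+1$, $a_0=1$, $a_1=-(\alpha+1)$, $a_p=(-1)^{\alpha+1}$) reproduces exactly the stated moduli and the line for $\mathcal X_{\alpha,0}$, and your matching $j_k=k$, $s_k=k-1$ --- via conjugation symmetry, confinement of each branch to its half-plane (forced by simplicity of the zeros), and the identification of $\mathcal X_{\alpha,0}$ as the distinguished real negative branch --- supplies precisely the detail the paper leaves implicit, indeed with more care than the paper itself.
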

The following figures (figures \ref{fig2} and \ref{fig3}) explain numerically the result of Proposition \ref{prop3}.
\begin{figure}[h]
  \includegraphics[scale=0.3]{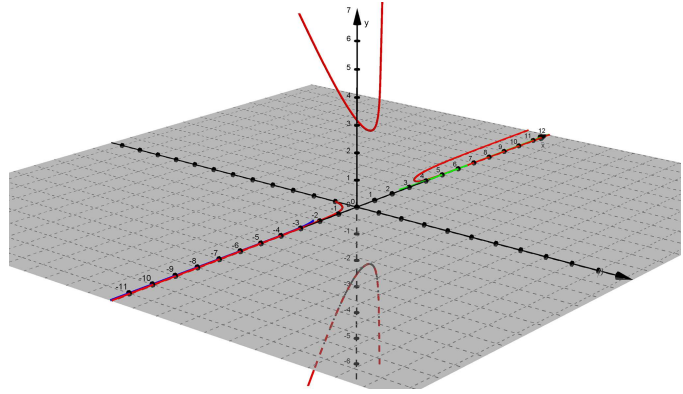}\\
  \caption{Graphs of ${\mathcal{X}_{3,\bullet}}$ (in red) with asymptotic curves ( to $\mathscr{C}_{\mathcal{X}_{3,0}}$ in blue  and to $\mathscr{C}_{\mathcal{X}_{3,2}}$ in green)}\label{fig2}
\end{figure}

\begin{figure}
  \includegraphics[scale=0.5]{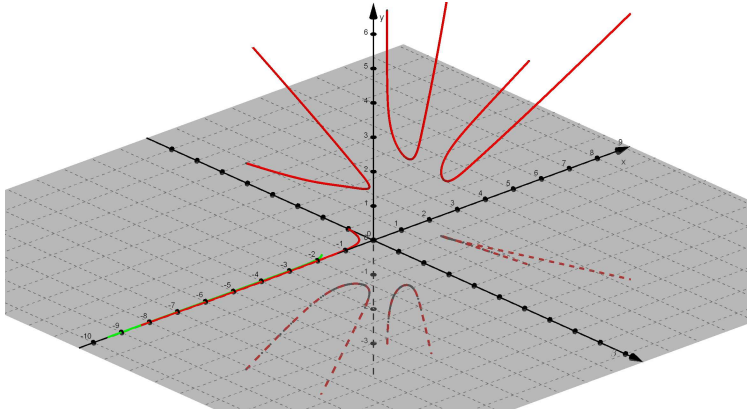}\\
  \caption{Graphs of ${\mathcal{X}_{6,\bullet}}$ (in red) with asymptotic curve to $\mathscr{C}_{\mathcal{X}_{6,0}}$ (in green)}\label{fig3}
\end{figure}
\subsection{Even and odd Bergman kernels}
Following the idea of Krantz developed in \cite{Kr}, we consider the subspaces $\mathscr E_{\alpha,\beta}^2(\mathbb D)$ and $\mathscr L_{\alpha,\beta}^2(\mathbb D)$ of $\mathcal A_{\alpha,\beta}^2(\mathbb D)$ generated respectively by the even $(e_{2n})_{n\geq0}$ and the odd $(e_{2n+1})_{n\geq0}$ sequences. Hence  $\mathscr E_{\alpha,\beta}^2(\mathbb D)$ and $\mathscr L_{\alpha,\beta}^2(\mathbb D)$ are Hilbert subspaces of $\mathcal A_{\alpha,\beta}^2(\mathbb D)$ formed respectively by even and   odd functions. The reproducing Bergman kernels of these spaces are given by $\mathbb E_{\alpha,\beta}(z,w)=\mathcal E_{\alpha,\beta}(z\overline{w})$ and $\mathbb L_{\alpha,\beta}(z,w)=\mathcal L_{\alpha,\beta}(z\overline{w})$ where
$$\begin{array}{lcl}
     \mathcal E_{\alpha,\beta}(\xi)&=&\frac{1}{2}(\mathcal K_{\alpha,\beta}(\xi)+\mathcal K_{\alpha,\beta}(-\xi))\\
     &=&\ds \frac{1}{2(1-\xi^2)^{\alpha+2}}\left((1+\xi)^{\alpha+2}Q_{\alpha,\beta}(\xi)+(1-\xi)^{\alpha+2}Q_{\alpha,\beta}(-\xi)\right)\\
     &=:&\ds \frac{\mathcal I_{\alpha,\beta}(\xi)}{2(1-\xi^2)^{\alpha+2}}
  \end{array}$$
and
$$\begin{array}{lcl}
     \mathcal L_{\alpha,\beta}(\xi)&=&\frac{1}{2}(\mathcal K_{\alpha,\beta}(\xi)-\mathcal K_{\alpha,\beta}(-\xi))\\
     &=&\ds \frac{1}{2(1-\xi^2)^{\alpha+2}}\left((1+\xi)^{\alpha+2}Q_{\alpha,\beta}(\xi)-(1-\xi)^{\alpha+2}Q_{\alpha,\beta}(-\xi)\right)\\
     &=:&\ds \frac{\mathcal J_{\alpha,\beta}(\xi)}{2(1-\xi^2)^{\alpha+2}}
  \end{array}$$

    Again, to study the zeros of even and odd Bergman kernels, it suffices to study the zeros of the corresponding functions $\mathcal I_{\alpha,\beta}$ and $\mathcal J_{\alpha,\beta}$. Let $\varepsilon_{\alpha,\beta}$ (resp. $\Theta_{\alpha,\beta}$) be the number of zeros of the function $\mathcal I_{\alpha,\beta}$ (resp. $\mathcal J_{\alpha,\beta}$) in the unit disk $\mathbb D$ counted with their multiplicities. To determine $\varepsilon_{\alpha,\beta}$ and $\Theta_{\alpha,\beta}$ in the case when $\alpha\in\mathbb N$, we start by the case $\beta=0$. In this case it is easy  to check that the zeros of $\mathcal I_{\alpha,0}$ are given by $z_k:=-i\tan\left(\frac{(2k+1)\pi}{2(\alpha+2)}\right)$ where $0\leq k\leq \alpha+1$ (we omit the value $k$ for which $\cos(\frac{(2k+1)\pi}{2(\alpha+2)})=0$ whenever $\alpha$ is odd). Similarly to the even case, the zeros  of $\mathcal J_{\alpha,0}$ are given by $w_k:=-i\tan\left(\frac{k\pi}{\alpha+2}\right),\ 0\leq k\leq \alpha+1$. It follows that if $\alpha=4\tau+r$ with $\tau\in\mathbb N$ and $0\leq r\leq 3$ then
        $$\varepsilon_{\alpha,0}=\left\{\begin{array}{lcl}
                             2\tau& if& r=0\\
                             2\tau+2 & if& 1\leq r\leq 3
                          \end{array}\right.
        \qquad \Theta_{\alpha,0}=\left\{\begin{array}{lcl}
                             2\tau+1& if& 0\leq r\leq 2\\
                             2\tau+3 & if& r=3
                          \end{array}\right.
        $$
    \begin{prop}
        Let $\alpha=4\tau+r\in\mathbb N$ with $\tau\in\mathbb N$ and $0\leq r\leq 3$.
        \begin{enumerate}
            \item If $r\neq0$ then:
                \begin{enumerate}
                    \item  There exists $-1<\beta_4<0$ such that for every $\beta_4<\beta\leq0$ we have $\varepsilon_{\alpha,\beta}=\varepsilon_{\alpha,0}$.
                    \item There exists $-1<\beta_5<0$ such that for every $-1<\beta<\beta_5$ we have $\Theta_{\alpha,\beta}=\varepsilon_{\alpha,0}+1$.
                \end{enumerate}
          \item If $r\neq2$ then:
                \begin{enumerate}
                  \item There exists $-1<\beta_3<0$ such that for every $-1<\beta<\beta_3$ we have $\varepsilon_{\alpha,\beta}=\Theta_{\alpha,0}+1$.
                  \item There exists $-1<\beta_6<0$ such that for every $\beta_6<\beta\leq0$ we have $\Theta_{\alpha,\beta}=\Theta_{\alpha,0}$.
                \end{enumerate}
        \end{enumerate}
    \end{prop}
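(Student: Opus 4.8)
The plan is to deduce all four assertions from Rouch\'e's theorem, feeding it the uniform limits of $\mathcal I_{\alpha,\beta}$ and $\mathcal J_{\alpha,\beta}$ at the two endpoints of $]-1,0[$ supplied by Lemma \ref{l3}. Since $Q_{\alpha,\beta}=\beta G_{\alpha,\beta}$ one has $(1+\beta)Q_{\alpha,\beta}=\beta(1+\beta)G_{\alpha,\beta}$, so Lemma \ref{l3} gives, uniformly on $\overline{\mathbb D}$, that $Q_{\alpha,\beta}(\xi)\to1$ as $\beta\to0^-$ and $(1+\beta)Q_{\alpha,\beta}(\xi)\to(\alpha+1)\xi$ as $\beta\to(-1)^+$. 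Substituting these into the definitions of $\mathcal I_{\alpha,\beta}$ and $\mathcal J_{\alpha,\beta}$, and using $Q_{\alpha,0}\equiv1$, yields four limiting functions: as $\beta\to0^-$, $\mathcal I_{\alpha,\beta}\to\mathcal I_{\alpha,0}$ and $\mathcal J_{\alpha,\beta}\to\mathcal J_{\alpha,0}$; as $\beta\to(-1)^+$, $(1+\beta)\mathcal I_{\alpha,\beta}\to(\alpha+1)\xi\,\mathcal J_{\alpha,0}$ and $(1+\beta)\mathcal J_{\alpha,\beta}\to(\alpha+1)\xi\,\mathcal I_{\alpha,0}$, where the interchange of $\mathcal I$ and $\mathcal J$ comes from $(1+\beta)Q_{\alpha,\beta}(-\xi)\to-(\alpha+1)\xi$.

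The heart of the matter --- and the reason the restrictions $r\neq0$ and $r\neq2$ appear --- is to locate the zeros of $\mathcal I_{\alpha,0}$ and $\mathcal J_{\alpha,0}$ on $\partial\mathbb D$, since Rouch\'e's theorem fails precisely when the limiting function vanishes on the boundary. From $\mathcal I_{\alpha,0}(\xi)=(1+\xi)^{\alpha+2}+(1-\xi)^{\alpha+2}$ a zero satisfies $\bigl(\tfrac{1+\xi}{1-\xi}\bigr)^{\alpha+2}=-1$, so the zeros are the purely imaginary $z_k$; such a $z_k$ lies on $\partial\mathbb D$ iff $\bigl|\tan\tfrac{(2k+1)\pi}{2(\alpha+2)}\bigr|=1$, i.e. $z_k=\pm i$, which occurs for an admissible $k$ exactly when $\alpha\equiv0\pmod 4$, that is $r=0$. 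The identical computation for $\mathcal J_{\alpha,0}(\xi)=(1+\xi)^{\alpha+2}-(1-\xi)^{\alpha+2}$, whose zeros are the $w_k$, shows a boundary zero occurs exactly when $\alpha\equiv2\pmod 4$, i.e. $r=2$. Hence $\mathcal I_{\alpha,0}$ is zero-free on $\partial\mathbb D$ iff $r\neq0$, and $\mathcal J_{\alpha,0}$ is zero-free on $\partial\mathbb D$ iff $r\neq2$ (note also $\mathcal I_{\alpha,0}(\pm1)=2^{\alpha+2}\neq0$, and the $z_k,w_k$ are imaginary, so $\pm i$ are the only candidates on the circle).

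With these facts in hand the four claims follow by counting, with multiplicity, the zeros in $\mathbb D$ of the relevant limiting function and invoking Rouch\'e on $\partial\mathbb D$; multiplying by the nonzero constant $(1+\beta)$ does not change the zero set of $\mathcal I_{\alpha,\beta}$ or $\mathcal J_{\alpha,\beta}$. For (1)(a) and (2)(b) the limit is $\mathcal I_{\alpha,0}$ (resp. $\mathcal J_{\alpha,0}$), carrying $\varepsilon_{\alpha,0}$ (resp. $\Theta_{\alpha,0}$) zeros in $\mathbb D$; when $r\neq0$ (resp. $r\neq2$) there are no boundary zeros, so $\varepsilon_{\alpha,\beta}=\varepsilon_{\alpha,0}$ (resp. $\Theta_{\alpha,\beta}=\Theta_{\alpha,0}$) for $\beta$ near $0$. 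For (1)(b) the limit is $(\alpha+1)\xi\,\mathcal I_{\alpha,0}(\xi)$; since $\mathcal I_{\alpha,0}(0)=2\neq0$, the factor $\xi$ adds one simple zero at the origin, giving $\varepsilon_{\alpha,0}+1$ zeros in $\mathbb D$, a count stable when $r\neq0$. For (2)(a) the limit is $(\alpha+1)\xi\,\mathcal J_{\alpha,0}(\xi)$; here $\mathcal J_{\alpha,0}$ already has a simple zero at $0$, so the origin becomes a double zero and the total is $(\Theta_{\alpha,0}-1)+2=\Theta_{\alpha,0}+1$, stable when $r\neq2$. In each case the uniform convergence on the compact set $\partial\mathbb D$, together with a positive lower bound for the nonvanishing limit there, furnishes the threshold $\beta_i$ beyond which the Rouch\'e inequality $|f_\beta-f_0|<|f_0|$ holds.

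The only genuinely delicate point is the boundary analysis of the second paragraph; everything else is a mechanical transfer of zero counts. One should nonetheless verify two technical details: that the convergence in Lemma \ref{l3} is uniform up to $\partial\mathbb D$ (it is, since the tail series defining $G_{\alpha,\beta}$ converges normally on $\overline{\mathbb D}$ by Stirling, so the limits above hold uniformly on $\overline{\mathbb D}$), and that the multiplicity at the origin is tracked correctly in the two endpoint-$(-1)$ cases, where the spurious factor $\xi$ either introduces a new simple zero (for $\mathcal I$, since $\mathcal I_{\alpha,0}(0)\neq0$) or merges with the existing simple zero of $\mathcal J_{\alpha,0}$ at $0$ to form a double zero.
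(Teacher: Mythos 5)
Your proof is correct and follows essentially the same route as the paper: Lemma \ref{l3} supplies the uniform limits $(1+\beta)\mathcal I_{\alpha,\beta}\to\mathcal I_{\alpha,0}$ (resp. $(\alpha+1)\xi\,\mathcal J_{\alpha,0}$) as $\beta\to0^-$ (resp. $\beta\to(-1)^+$), and symmetrically for $\mathcal J_{\alpha,\beta}$, after which Rouch\'e's theorem transfers the zero counts. The only difference is that you verify explicitly what the paper merely claims --- that $\pm i$ are zeros of $\mathcal I_{\alpha,0}$ (resp. $\mathcal J_{\alpha,0}$) exactly when $r=0$ (resp. $r=2$), hence the stated restrictions --- and you track the multiplicity at the origin in the $\beta\to(-1)^+$ limits as well as the uniformity of convergence up to $\partial\mathbb D$; these are worthwhile details the paper leaves implicit.
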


    \begin{proof}
        We claim that $\pm i$ are zeros of $\mathcal I_{\alpha,0}$ (resp. $\mathcal J_{\alpha,0}$) when $\alpha=4\tau$ (resp. $\alpha=4\tau+2$). For this reason we omit the corresponding values of $\alpha$ in the proposition in order to use the Rouch\'e theorem. Hence it suffices to study the convergence in terms of the parameter $\beta$.\\ Thanks to Lemma \ref{l3}, the family of polynomials $((1+\beta)Q_{\alpha,\beta}(\xi))_{-1<\beta<0}$ converges to  $1$ on $\mathbb D$ when $\beta\to 0^-$ and to the polynomial $(\alpha+1)\xi$ when $\beta\to (-1)^+$. It follows that $(1+\beta)\mathcal I_{\alpha,\beta}(\xi)$ converges to $\mathcal I_{\alpha,0}(\xi)$ on $\mathbb D$ as $\beta\to 0^-$ and to $(\alpha+1)\xi \mathcal J_{\alpha,0}(\xi)$ on $\mathbb D$ as $\beta\to (-1)^+$.\\
        For the odd case, the family $(1+\beta)\mathcal J_{\alpha,\beta}(\xi)$ converges to $\mathcal J_{\alpha,0}(\xi)$ when $\beta\to 0^-$ and to $(\alpha+1)\xi \mathcal I_{\alpha,0}(\xi)$ when $\beta\to (-1)^+$. Using the Rouch\'e theorem the result follows.
    \end{proof}
    To improve the previous result, we consider the number of zeros $\widehat{\varepsilon}_{\alpha,0}$ (resp. $\widehat{\Theta}_{\alpha,0}$) of the function $\mathcal I_{\alpha,0}$
    (resp. $\mathcal J_{\alpha,0}$) in the closed unit disk $\overline{\mathbb D}$ given by $\widehat{\varepsilon}_{\alpha,0}=2\tau+2$ and
        $$\widehat{\Theta}_{\alpha,0}=\left\{\begin{array}{lcl}
                             2\tau+1& if& 0\leq r\leq 1\\
                             2\tau+3 & if& 2\leq r\leq 3
                          \end{array}\right.
        $$ where $\alpha=4\tau+r$. Using the same idea, one can prove the following corollary:
    \begin{cor}\label{cor3}
        Let $\alpha\in\mathbb N$ and $\eta_0=\tan\left(\frac{\pi}{4}+\frac{\pi}{\alpha+2}\right)$.
        \begin{enumerate}
          \item There exist $-1<\beta_3<\beta_4<0$ that depend on $\alpha$ such that for every $1<\eta<\eta_0$, the polynomial $\mathcal I_{\alpha,\beta}(\eta\xi)$ has exactly $\widehat{\varepsilon}_{\alpha,0}$ zeros in $\mathbb D$ for every $\beta\in]\beta_4,0]$ and $\widehat{\Theta}_{\alpha,0}+1$ zeros in $\mathbb D$ for every $\beta\in]-1,\beta_3[$.
          \item There exist $-1<\beta_5<\beta_6<0$ that depend on $\alpha$ such that for every $1<\eta<\eta_0$, the polynomial $\mathcal J_{\alpha,\beta}(\eta\xi)$ has exactly $\widehat{\Theta}_{\alpha,0}$ zeros in $\mathbb D$ for every $\beta\in]\beta_6,0]$ and $\widehat{\varepsilon}_{\alpha,0}+1$ zeros in $\mathbb D$ for every $\beta\in]-1,\beta_5[$.
        \end{enumerate}
    \end{cor}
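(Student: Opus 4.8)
The plan is to combine the explicit description of the zeros of $\mathcal I_{\alpha,0}$ and $\mathcal J_{\alpha,0}$ on the imaginary axis with the $\beta$-limits provided by Lemma \ref{l3}, transporting the resulting zero counts by Rouch\'e's theorem. Recall that the zeros of $\mathcal I_{\alpha,0}$ are the points $-i\tan\!\left(\frac{(2k+1)\pi}{2(\alpha+2)}\right)$ and those of $\mathcal J_{\alpha,0}$ are $-i\tan\!\left(\frac{k\pi}{\alpha+2}\right)$; in each family the arguments increase by the common angular step $\frac{\pi}{\alpha+2}$, and a zero lands on $\partial\mathbb D$ exactly when its argument equals $\frac{\pi}{4}$. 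The role of $\eta_0$ is purely geometric: the dilation $\xi\mapsto\eta\xi$ divides every zero by $\eta$, so for $1<\eta<\eta_0$ it drags each zero of modulus $\le 1$ strictly inside $\mathbb D$ while leaving each zero lying just outside $\overline{\mathbb D}$ still outside. First I would verify this threshold: $\eta_0=\tan\!\left(\frac{\pi}{4}+\frac{\pi}{\alpha+2}\right)$ is the modulus of the first zero met outside $\overline{\mathbb D}$, so that no zero crosses $\partial\mathbb D$ under any such dilation. Granting this, $\mathcal I_{\alpha,0}(\eta\xi)$ has exactly $\widehat{\varepsilon}_{\alpha,0}$ zeros in $\mathbb D$, $\mathcal J_{\alpha,0}(\eta\xi)$ has exactly $\widehat{\Theta}_{\alpha,0}$ zeros in $\mathbb D$, and neither dilated polynomial vanishes on $\partial\mathbb D$.

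With these base counts in hand, I would transfer them to nearby $\beta$ by Lemma \ref{l3}. As $\beta\to 0^-$ the family $(1+\beta)\mathcal I_{\alpha,\beta}(\eta\xi)$ converges uniformly on $\overline{\mathbb D}$ to $\mathcal I_{\alpha,0}(\eta\xi)$, which is non-vanishing on $\partial\mathbb D$; Rouch\'e's theorem then produces a threshold $\beta_4$ with $\mathcal I_{\alpha,\beta}(\eta\xi)$ carrying exactly $\widehat{\varepsilon}_{\alpha,0}$ zeros in $\mathbb D$ for $\beta\in]\beta_4,0]$. As $\beta\to(-1)^+$, Lemma \ref{l3} gives
$$(1+\beta)\,\mathcal I_{\alpha,\beta}(\eta\xi)\longrightarrow(\alpha+1)\,\eta\xi\,\mathcal J_{\alpha,0}(\eta\xi),$$
and the factor $\eta\xi$ contributes one extra unit to the count at the origin, so the limit has $\widehat{\Theta}_{\alpha,0}+1$ zeros in $\mathbb D$. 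A second application of Rouch\'e yields $\beta_3<\beta_4$ such that $\mathcal I_{\alpha,\beta}(\eta\xi)$ has $\widehat{\Theta}_{\alpha,0}+1$ zeros in $\mathbb D$ for $\beta\in]-1,\beta_3[$. This establishes assertion (1); assertion (2) follows by the symmetric argument, the relevant limits being $\mathcal J_{\alpha,0}(\eta\xi)$ as $\beta\to0^-$ and $(\alpha+1)\eta\xi\,\mathcal I_{\alpha,0}(\eta\xi)$ as $\beta\to(-1)^+$, which give the counts $\widehat{\Theta}_{\alpha,0}$ on $]\beta_6,0]$ and $\widehat{\varepsilon}_{\alpha,0}+1$ on $]-1,\beta_5[$.

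The delicate point, and the step I expect to be the main obstacle, is the boundary bookkeeping at $\beta=0$ that pins down $\eta_0$. One must check that for $1<\eta<\eta_0$ no dilated zero of $\mathcal I_{\alpha,0}$ or $\mathcal J_{\alpha,0}$ lands exactly on $\partial\mathbb D$ (so that Rouch\'e applies) and that the dilation captures precisely the zeros in $\overline{\mathbb D}$ and not a single extra zero from outside. This rests on the monotonicity of $\tan$ on $]0,\frac{\pi}{2}[$ together with the fact that the arguments $\frac{(2k+1)\pi}{2(\alpha+2)}$ and $\frac{k\pi}{\alpha+2}$ straddle $\frac{\pi}{4}$ with the same step $\frac{\pi}{\alpha+2}$, so that the boundary zeros $\pm i$ occurring for $\alpha\equiv0$ (resp. $\alpha\equiv2$) modulo $4$ are exactly the ones absorbed when passing from the open-disk counts $\varepsilon_{\alpha,0},\Theta_{\alpha,0}$ to the closed-disk counts $\widehat{\varepsilon}_{\alpha,0},\widehat{\Theta}_{\alpha,0}$. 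Once the geometry of this first gap is settled, the two Rouch\'e comparisons are routine and the proof closes.
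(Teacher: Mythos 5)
Your overall strategy is exactly the one the paper intends: the paper's ``proof'' of Corollary \ref{cor3} is the single remark that it follows ``using the same idea'' as the preceding proposition, i.e.\ precisely your combination of the dilation $\xi\mapsto\eta\xi$, the limits furnished by Lemma \ref{l3} (namely $(1+\beta)\mathcal I_{\alpha,\beta}(\eta\xi)\to\mathcal I_{\alpha,0}(\eta\xi)$ as $\beta\to0^-$ and $(1+\beta)\mathcal I_{\alpha,\beta}(\eta\xi)\to(\alpha+1)\eta\xi\,\mathcal J_{\alpha,0}(\eta\xi)$ as $\beta\to(-1)^+$, and symmetrically for $\mathcal J$), and Rouch\'e's theorem. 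Those two Rouch\'e comparisons are indeed routine. The genuine gap is at the step you yourself single out as delicate, and your proposed resolution of it is false: $\eta_0=\tan\bigl(\frac{\pi}{4}+\frac{\pi}{\alpha+2}\bigr)$ is \emph{not}, in general, the modulus of the first zero of $\mathcal I_{\alpha,0}$ (resp.\ $\mathcal J_{\alpha,0}$) outside $\overline{\mathbb D}$, so dilations with $\eta$ close to $\eta_0$ \emph{do} drag extra zeros into $\mathbb D$ and the stated counts fail.

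Here is the precise point. Within one family the arguments do advance by the step $\frac{\pi}{\alpha+2}$, but the first argument exceeding $\frac{\pi}{4}$ lies a \emph{full} step beyond $\frac{\pi}{4}$ only when $\frac{\pi}{4}$ itself is an argument of that same family, i.e.\ only for $\mathcal I_{\alpha,0}$ when $\alpha\equiv0\pmod 4$ and for $\mathcal J_{\alpha,0}$ when $\alpha\equiv2\pmod 4$. In every other case the nearest exterior zero lies at angular distance $\frac{\pi}{2(\alpha+2)}$, $\frac{\pi}{4(\alpha+2)}$ or $\frac{3\pi}{4(\alpha+2)}$ from $\frac{\pi}{4}$, hence has modulus strictly smaller than $\eta_0$. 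Concretely, for $\alpha=3$ one has $\mathcal I_{3,0}(\xi)=2(1+10\xi^2+5\xi^4)$, with zeros $\pm i\tan\frac{\pi}{10}$ (modulus $\approx0.33$) and $\pm i\tan\frac{3\pi}{10}$ (modulus $\approx1.38$), so $\widehat{\varepsilon}_{3,0}=2$, while $\eta_0=\tan\frac{9\pi}{20}\approx6.31$; taking $\eta=2\in\,]1,\eta_0[$, all four zeros of $\mathcal I_{3,0}(2\xi)$ lie in $\mathbb D$, so at $\beta=0$ (a point of $]\beta_4,0]$ no matter how $\beta_4$ is chosen) the count is $4\neq\widehat{\varepsilon}_{3,0}$, and part (1) cannot hold. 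The defect persists even in the ``good'' congruence classes, because each part involves \emph{both} families: for $\alpha\equiv0\pmod4$ the $\beta\to(-1)^+$ limit in part (1) is $(\alpha+1)\eta\xi\,\mathcal J_{\alpha,0}(\eta\xi)$, and $\mathcal J_{\alpha,0}$ has exterior zeros of modulus $\tan\bigl(\frac{\pi}{4}+\frac{\pi}{2(\alpha+2)}\bigr)<\eta_0$, so the count $\widehat{\Theta}_{\alpha,0}+1$ also fails for $\eta$ near $\eta_0$. (For $\alpha\le1$ the interval $]1,\eta_0[$ is empty since $\eta_0\le0$, and for $\alpha=2$ the quantity $\eta_0=\tan\frac{\pi}{2}$ is not even defined.) So the obstacle is not merely left unproved in your write-up; it is unprovable with this $\eta_0$. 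Your Rouch\'e scheme does prove the corollary once $\eta_0$ is replaced by the actual smallest modulus of a zero of $\mathcal I_{\alpha,0}$ or $\mathcal J_{\alpha,0}$ lying outside $\overline{\mathbb D}$, for instance by the uniform choice $\tan\bigl(\frac{\pi}{4}+\frac{\pi}{4(\alpha+2)}\bigr)$. This flaw is inherited from the paper's own statement --- its one-line proof never examines the role of $\eta_0$ --- but your proposal asserts the false geometric fact (via the ``same step'' straddling argument) rather than detecting and repairing it.
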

    If the conditions of the previous proposition are satisfied then one can take $\eta=1$ in the corollary to obtain the same result given by the proposition.

\section{Open problems}
It is interesting to study the asymptotic distribution of zeros of $G_{\alpha,\beta}$ when $\alpha\in\mathbb N$ and goes to infinity. In other words, can we find a positive measure  $\mu$ such that the sequence of measures $$\mu_{\alpha,\beta}:=\frac{1}{\alpha+1}\sum_{j=0}^{\alpha+1}\delta_{\mathcal X_{\alpha,j}(\beta)}$$ converges weakly to the measure $\mu$ as $\alpha\to+\infty$? Geometrically, the distribution of the set $\{\mathcal X_{\alpha,j}(\beta),\ 0\leq j\leq \alpha+1\}$ may depend on $\alpha$ in some non trivial way. For example, the distribution of sets   $\mathcal X_{\alpha,\bullet}(-10^{-4})$ for $\alpha=31,32,33,34,36$  are similar (see figure \ref{fig4}) however these are different to the one that correspond to $\alpha=35$ (see figure \ref{fig5}).\\
Can we find explicitly the equation of  the parametric curve that describe  the set $\mathcal X_{\alpha,\bullet}(\beta)$? (this curve may be a circle in figure \ref{fig4} for   $\alpha=31,32,33,34,36$). See also figures \ref{fig5} and \ref{fig6}.\\

It is simple to prove that for every $1\leq k\leq\alpha$, there exists $t_{\alpha,k}\in]-1,0[$ such that $$|\mathcal X_{\alpha,k}(t_{\alpha,k})|=\min_{-1<\beta<0}|\mathcal X_{\alpha,k}(\beta)|$$
    and satisfies
    $$ \sum_{j,k=0}^{\alpha+1} {\alpha+1\choose{j}}{\alpha+1\choose{k}} \frac{(-1)^{j+k}}{(j+t_{\alpha,k})^2}R_{\alpha,k}^{j+k}\cos(\theta_{\alpha,k}(j-k))=0$$
    with $\mathcal X_{\alpha,k}(t_{\alpha,k})=R_{\alpha,k} e^{i\theta_{\alpha,k}}.$\\

One of the most important question is to see if the critical   value $t_{\alpha,k}$ of $\beta$ that realizes the minimum of $|\mathcal X_{\alpha,k}(\beta)|$  doesn't depend on $k$. It means that all functions attempt their minimums at the same "time".\\

For even and odd kernels, can we prove Corollary \ref{cor3} with $\eta=1$? Indeed, if we show that the zeros of $\mathcal I_{\alpha,\beta}(\xi)$ and $\mathcal J_{\alpha,\beta}(\xi)$ that converges to $\pm i$ are in $\mathbb D$ then we conclude the result. We note that this fact is confirmed numerically for some values of $\alpha$.

\section*{Annex: Numerical results}
All figures of this paper were produced using Python software. We give here the used code.\\
******************************************
\begin{verbatim}
from scipy import special
from sympy.abc import x, y, z
def A(beta, alpha):
 s=0
 for j in range(0, alpha+2):
 s=s+ ((1)/(j+beta)*special.binom(alpha+1,j)*(-x)**j)
 return s
import numpy
from cmath import *
from sympy.solvers import solve
import csv
DATA_PATH = '/content/drive/My Drive/graphes data/alpha7_7.csv'
i=1
with open(DATA_PATH, mode='w', newline='') as points_file:
 points_writer = csv.writer(points_file, delimiter=',')
 for beta in numpy.arange(10**(-6), 1, 10**(-2)):
 row = []
 for s in solve(A(beta, 6), x):
 sol = complex(s)
 row.append(-beta)
 row.append(sol.real)
 row.append(sol.imag)
 points_writer.writerow(row)
 print(i)
 i=i+1
 i=1
 for beta in numpy.arange(1-10**(-2), 1, 10**(-3)):
 row = []
 for s in solve(A(beta, 6), x):
 sol = complex(s)
 row.append(-beta)
 row.append(sol.real)
 row.append(sol.imag)
 points_writer.writerow(row)
 print(i)
 i=i+1
\end{verbatim}
**********************************************\\
In the following, we present some zeros sets of $G_{\alpha,\beta}$ for $\beta=-10^{-4}$ and $\alpha\in\{31,32,33,34,35,36,49,51,101\}$. The values of $\alpha$ and $\beta$ are chosen arbitrary just to see that there is no geometric stability of these zeros. It is possible that the geometric distribution of zeros of $G_{\alpha,\beta}$ depends on both $\alpha$ and $\beta$ in a complicate manner. It is also possible that if we see numerically  the geometric distribution of zeros of $G_{\alpha,\beta}$ for $\alpha$ large enough, then some new limit curve appear. However as a material problem, it was not possible for us to exceed the value $\alpha=101$.

\begin{figure}[h!]
   \includegraphics[scale=0.13]{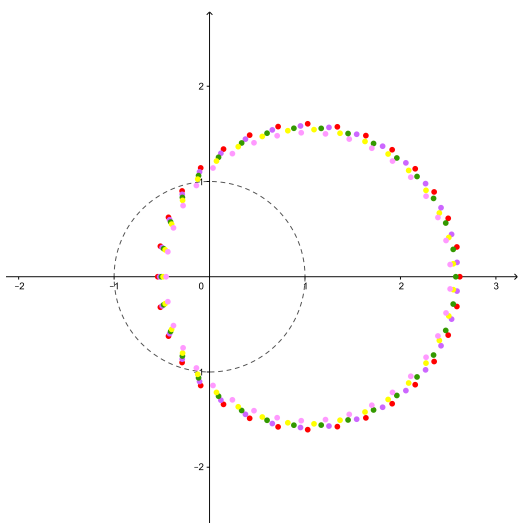}\\
   \caption{The Sets $\mathcal X_{\alpha,\bullet}(-10^{-4})$ for $\alpha\in\{31,32,33,34,36\}$}\label{fig4}
\end{figure}

\begin{figure}[h!]
   \begin{minipage}[b]{0.48\linewidth}
      \centering
      \includegraphics[scale=0.1]{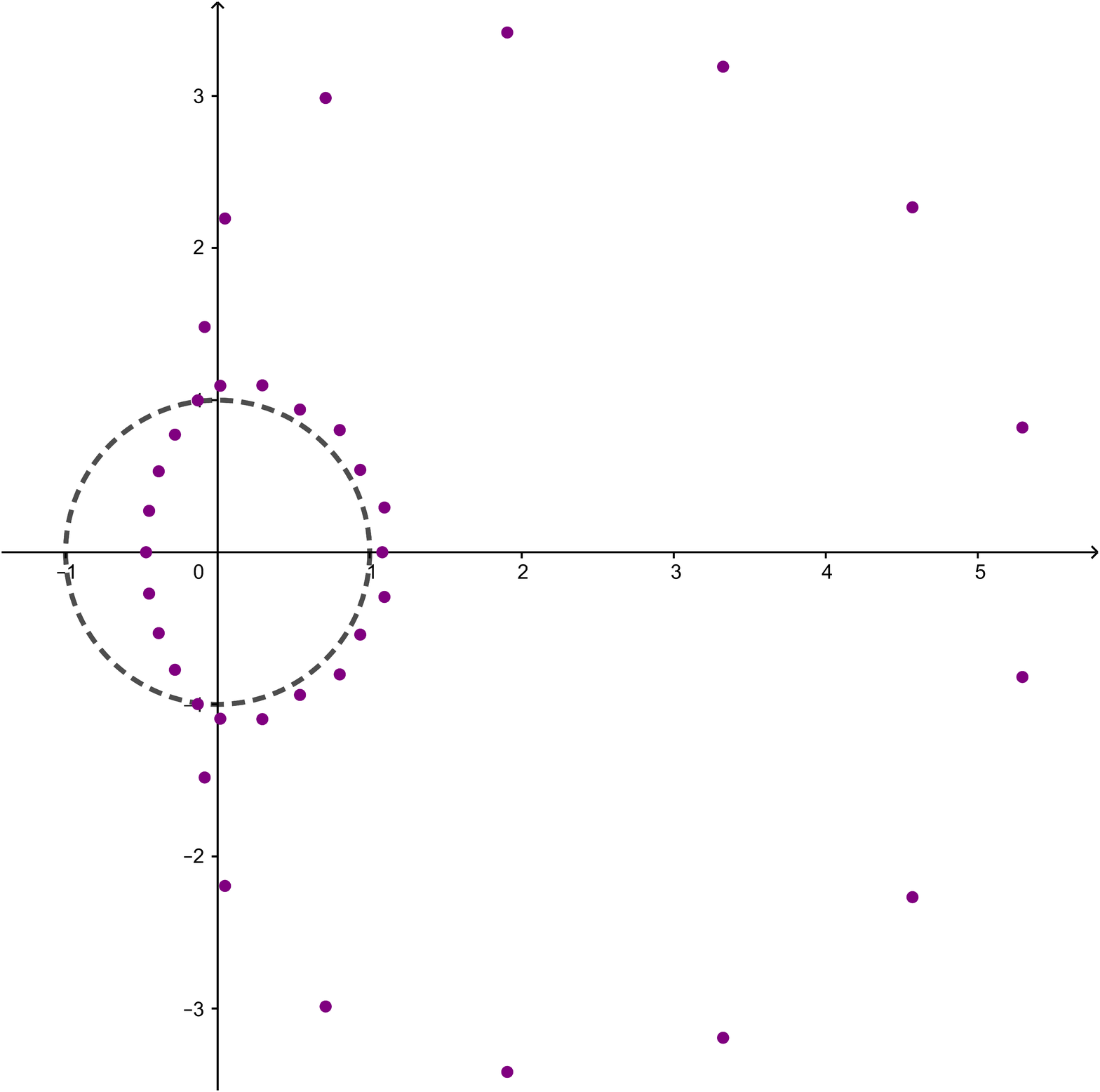}\\
   \end{minipage}\hfill
   \begin{minipage}[b]{0.48\linewidth}
      \centering
         \includegraphics[scale=0.125]{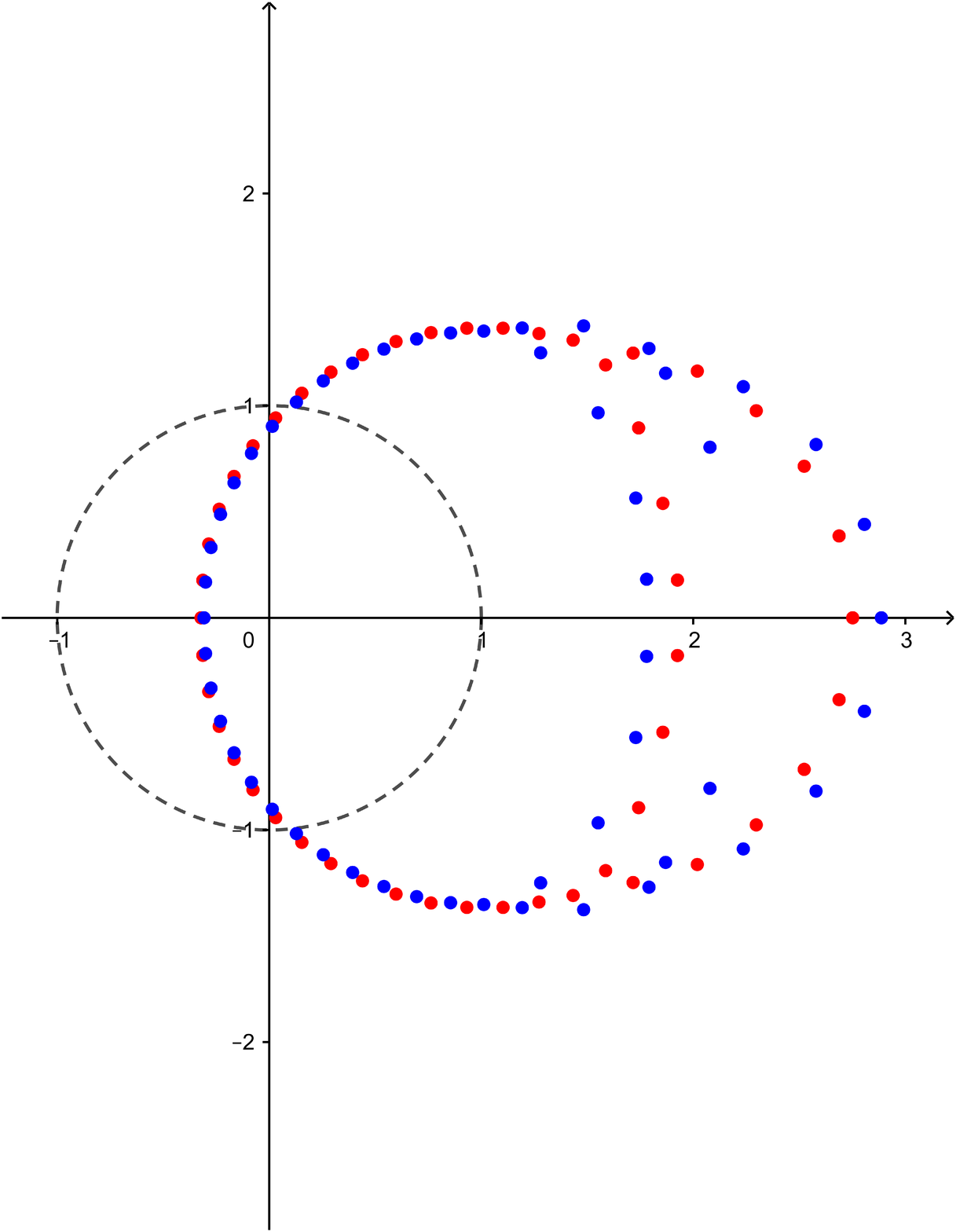}\\
   \end{minipage}
   \caption{The Sets $\mathcal X_{\alpha,\bullet}(-10^{-4})$ for $\alpha=35$ (in violet) at left and $\alpha=49$ (in red) and $\alpha=51$ (in blue) at right}\label{fig5}
\end{figure}

\begin{figure}
     \includegraphics[scale=0.12]{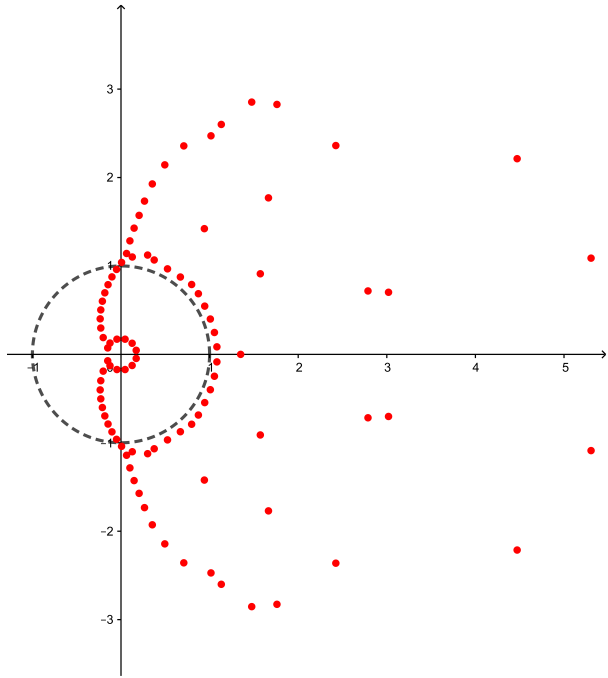}\\
   \caption{The Set $\mathcal X_{101,\bullet}(-10^{-4})$}\label{fig6}
\end{figure}
\end{document}